\documentclass[11pt]{amsart}
\pdfoutput=1

\usepackage{amsfonts, amstext, amsmath, amsthm, amscd, amssymb}
\usepackage{epsfig, graphics, psfrag, overpic, color}
\usepackage{enumerate}



\newtheorem{thm}{Theorem}[section]
\newtheorem{lem}[thm]{Lemma}

\newtheorem{cor}[thm]{Corollary}

\newtheorem{defn}[thm]{Definition}

\newtheorem*{mainthm}{Theorem \ref{thm:pattern1}}
\newtheorem*{maincor}{Corollary \ref{cor:Wd}}

\def \Z{\mathbb{Z}}
\def \T{\mathcal{T}}
\def \K{\mathbb{K}}

\def \C{\mathcal{C}}

\begin{document}

\title[Knots without cosmetic crossings]{Knots without cosmetic crossings}
\author[C. J. Balm and E. Kalfagianni]{Cheryl Jaeger Balm and Efstratia Kalfagianni}

\address[]{Department of Mathematics, Kansas State
University, Manhattan, KS 66502}

\email[]{cjbalm@math.ksu.edu}

\address[]{Department of Mathematics, Michigan State
University, E Lansing, MI 48824} \ \ \
\email[]{kalfagia@math.msu.edu}

\thanks{ {Research supported by NSF grant DMS-1105843.
}}

\begin{abstract}  Let $K'$ be a knot that admits no cosmetic  crossing changes and let $C$ be a prime, non-cable
knot.
Then any knot that is a satellite of $C$ with winding number zero and pattern $K'$
admits no  cosmetic crossing changes.
 As a consequence  we 
prove the nugatory crossing conjecture for
Whitehead doubles of prime, non-cable knots.
\smallskip
\smallskip
\smallskip
\smallskip
\smallskip
\smallskip

\noindent {\it Keywords:} Companion torus, cosmetic crossing, pattern,  satellite.
\smallskip
\smallskip
\smallskip
\smallskip

\noindent {\it Mathematics Subject Classification:} 57M25, 57M27, 57M50
\end{abstract}

\maketitle

\bigskip


\section{Introduction}

 A \emph{crossing disk} for an oriented knot $K\subset S^3$ is an embedded disk $D\subset S^3$ such that $K$ intersects ${\rm int}(D)$ twice with zero algebraic intersection number.  A crossing change on $K$ can be achieved by performing $(\pm 1)$-Dehn surgery of $S^3$ along the \emph{crossing circle} $L = \partial D$.  More broadly,  a  \emph{generalized crossing change} of order $q \in \Z - \{ 0 \}$
 is achieved by $(-1/q)$-Dehn surgery along the crossing circle $L$ and results in introducing $q$ full twists to $K$ at the \emph{crossing disk} $D$ bounded by $L$.  
See Figure \ref{fig:nug}.
 A (generalized) crossing change of $K$ and its corresponding crossing circle $L$ are called \emph{nugatory} if $L$ bounds an embedded disk in $S^3 - \eta(K)$, where $\eta(K)$ denotes a regular neighborhood of $K$ in $S^3$.  Obviously, a generalized crossing change of any order at a nugatory crossing of $K$ yields a knot isotopic to $K$.  
 \begin{defn}{\rm
A (generalized) crossing change on $K$ and its corresponding crossing circle are called \emph{cosmetic} if the crossing change yields a knot isotopic to $K$ and is performed at a crossing of $K$ which is \emph{not} nugatory.}
\end{defn}

%

It is a fundamental  open question whether there exist knots that admit cosmetic crossing changes \cite[Problem 1.58]{kirby}.
This question, often referred to as the \emph{nugatory crossing conjecture},  has been answered in the negative for many classes of knots.  It follows from work of Scharlemann and Thompson \cite{schar-thom}, based on techniques  of Gabai \cite{gabai},  that the unknot admits no  cosmetic generalized crossing changes. Torisu
showed that   2-bridge knots  admit no cosmetic generalized crossing changes    \cite{torisu},
and  Kalfagianni showed that the same is true for fibered knots \cite{kalf}. Obstructions to cosmetic crossing changes in genus-one knots were found by the authors with Friedl and Powell in \cite{balm}, where it is shown that genus-one, algebraically non-slice knots admit no cosmetic generalized crossing changes. The objective of the current paper is to study the behavior of potential cosmetic crossing changes under the operation of forming satellites with winding number zero.

To state our results, let $\K$ denote the class of knots which do not  admit cosmetic generalized crossing changes.  By the previous paragraph, $\K$ contains all fibered knots, 2-bridge knots and genus-one, algebraically non-slice knots.  Torisu shows in \cite{torisu} that the connect sum of two or more knots in $\K$ is also in $\K$.

\begin{figure}
  \begin{center}
    \begin{tabular}{ccccc}

       \psfig{figure=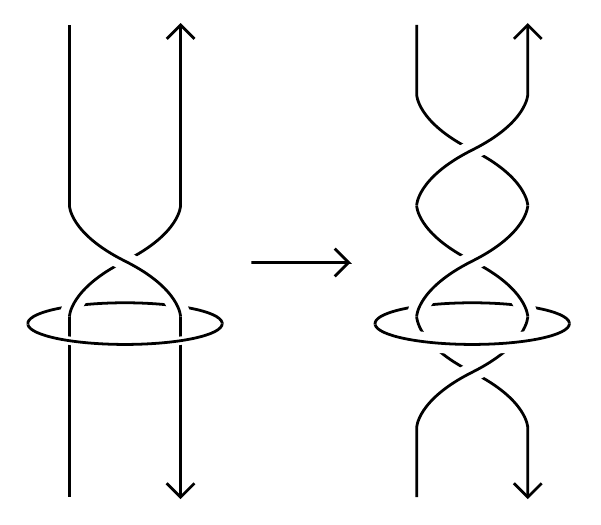,height=1.1in, clip=}& \hspace{.01in} &
       \psfig{figure=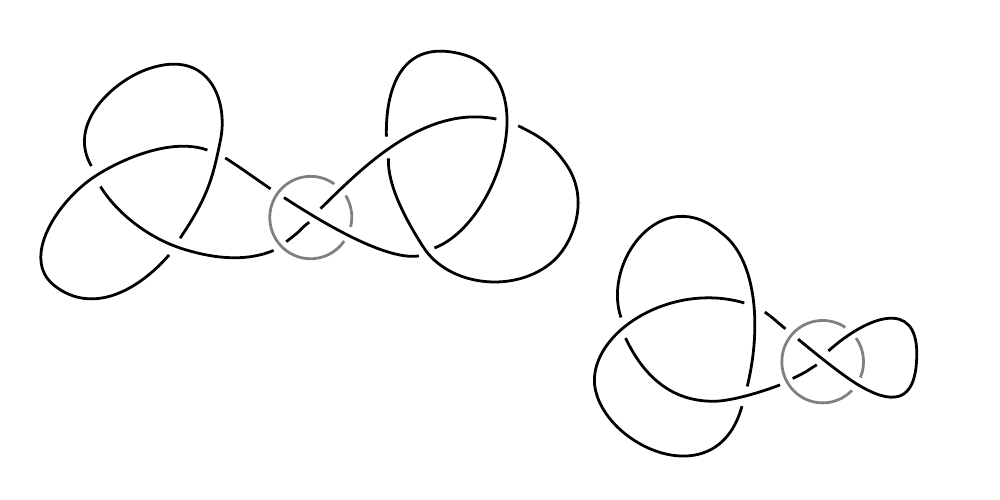,height=1.1in, clip=}
    \end{tabular}
  \end{center}
  \caption{Left:  A generalized crossing change of order 2.  Right:  Examples of nugatory crossing circles.}
  \label{fig:nug}
\end{figure}

\begin{defn}\label{def:wrap}{\rm We will say a torus $T$ is \emph{standardly embedded} in $S^3$ if $T$ bounds a solid torus on both sides. 

A knot $K$ embedded in a solid torus $V$, is called \emph{geometrically essential}
if the geometric intersection of $K$ with every meridian disc of $V$ is non-trivial.

 For a knot $K$ embedded in a solid torus $V$, the \emph{winding number}, $w(K,V)$, is the  algebraic intersection number of $K$ with a meridian disk of $V$.} 
\end{defn}

In \cite{balm2},  Balm showed that any prime satellite knot with pattern a non-satellite knot in  $\K$ does not admit cosmetic
generalized  crossing changes of order greater than five. Here we restrict ourselves to satellites with winding number zero, and we obtain the following stronger result.

\begin{thm}\label{thm:pattern1} Let $C$ be a non-trivial,  prime, non-cable knot and let $V'$ be a standardly embedded solid torus in $S^3$.  Let $K'\in \K$  and suppose that $K'$ is embedded in $V'$ so that it is geometrically essential and such that  $w(K',V')=0$.  Then any knot $K$ that is a satellite of $C$ with pattern $(V',  K')$  admits no  cosmetic generalized crossing changes of any order. That is $K$ satisfies the nugatory crossing conjecture.
\end{thm}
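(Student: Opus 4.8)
The plan is to argue by contradiction and to push all of the relevant data into the companion solid torus. So suppose $K$ admits a cosmetic generalized crossing change of some order $q$, realized by a crossing circle $L=\partial D$ whose crossing disk $D$ meets $K$ in two points of opposite sign, so that the resulting knot $K_q$ is isotopic to $K$ while $L$ bounds no disk in $S^3\setminus\eta(K)$. Write $V\subset S^3$ for the solid torus carrying the satellite, $T=\partial V$ for the companion torus, and $E(C)=S^3\setminus\mathrm{int}(V)$ for the exterior of $C$. Since $C$ is nontrivial, $T$ is incompressible in $E(C)$; since $K$ is geometrically essential in $V$, a compressing disk for $T$ inside $V$ would be a meridian disk missing $K$, so $T$ is also incompressible in $V\setminus\eta(K)$ and hence in $S^3\setminus\eta(K)$. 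I note that $K\cap T=\varnothing$, so the two points of $D\cap K$ lie in $\mathrm{int}(V)$.

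The first real step is a reduction putting the crossing change inside $V$. I would place $D$ transverse to $T$ and minimize $|D\cap T|$. An intersection circle that is inessential on $T$ is removed by a standard innermost-disk isotopy, using that the relevant innermost subdisk of $D$ and the disk it cobounds on $T$ both miss $K$. For a circle that is essential on $T$, its innermost subdisk in $D$ lies entirely on one side of $T$; it cannot lie in $E(C)$, for there it would be a compressing disk for $T$ missing $K$, so it lies in $V$ and is therefore a meridian disk of $V$. Geometric essentiality forces this meridian disk to meet $K$, and $w(K,V)=0$ together with the fact that only two intersection points are available forces it to contain \emph{both} points of $D\cap K$. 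Consequently at most one essential circle can survive, cutting $D$ into a punctured meridian disk in $V$ and an unpunctured annulus in $E(C)$ running out to $L$; isotoping $L$ inward across that annulus (which misses $K$) lets me assume $D\subset V$. In every case the crossing change is supported inside $V$.

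Next I would descend to the pattern. Let $h\colon (V',K')\to(V,K)$ be the identification defining the satellite and set $L'=h^{-1}(L)$, $D'=h^{-1}(D)$. Because the $(-1/q)$-surgery is supported in $V$, the knot $K_q$ is again a satellite of $C$ with companion torus $T$, and its pattern is exactly the knot $K'_q$ obtained from $K'$ by the order-$q$ change along $L'$. Here I invoke the hypotheses that $C$ is nontrivial, prime and non-cable: these guarantee uniqueness of the satellite (companion) decomposition, so that $K_q\cong K$ forces $K'_q\cong K'$ as knots in $S^3$. Since $K'\in\K$, the crossing change along $L'$ cannot be cosmetic, and so it must be nugatory; that is, $L'$ bounds a disk $E'$ in $S^3\setminus\eta(K')$.

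It remains to convert $E'$ into a disk for $L$ in $S^3\setminus\eta(K)$, which contradicts non-nugatoriness and finishes the proof; this is the \textbf{main obstacle}. The difficulty is an asymmetry between the two exteriors: $S^3\setminus\mathrm{int}(V')$ is an unknotted solid torus $W'$, so a priori $E'$ may pass through $W'$ along longitudes of $V'$, whereas the satellite's exterior $E(C)$ is knotted with incompressible boundary and admits no such excursions. My plan is to minimize $|E'\cap T'|$ and to use, exactly as above, geometric essentiality of $K'$ in $V'$ (an essential innermost subdisk in $V'$ would be a meridian disk missing $K'$) together with $w(K',V')=0$ to eliminate the intersection circles. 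Either all excursions into $W'$ can be removed, so that $E'\subset V'$ and $h(E')$ is the desired disk for $L$; or a residual excursion forces $L'$ to be a meridian of $V'$ capped off by a two-punctured sphere, a configuration I would rule out using primeness and non-cabledness of $C$ to see that the corresponding sphere cannot be completed in the knotted exterior $E(C)$. Reconciling the unknotted outside of the pattern with the knotted exterior of $C$ is where I expect the argument to be most delicate; the innermost-circle reductions preceding it are routine once the incompressibility statements are in hand.
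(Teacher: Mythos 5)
Your reduction of the crossing disk into the companion torus is sound and is in fact a reasonable alternative to the paper's route: where the paper uses a minimal genus Seifert surface in $\overline{S^3-\eta(L)}$ together with Gabai's theorem (Lemma \ref{lem:LinV}), you run an innermost-circle argument directly on $D\cap T$. Two small caveats there: an innermost subdisk of $D$ may carry punctures (one puncture is killed by parity or $w(K,V)=0$; two punctures with an inessential boundary circle lets you swap onto $T$ and conclude $L$ is nugatory outright), and eliminating the unpunctured annuli of $D$ lying in $\overline{S^3-V}$ between nested meridional circles already requires that $\overline{S^3-V}$ contain no essential annuli, i.e., primeness and non-cabledness of $C$ (the paper's Lemma \ref{lem:annuli}), so those hypotheses enter here and not only later. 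The first genuine gap comes immediately after. Your claim that primeness and non-cabledness of $C$ ``guarantee uniqueness of the satellite (companion) decomposition, so that $K_q\cong K$ forces $K'_q\cong K'$'' is unsupported and, as stated, false: satellite presentations are not unique, the pattern $K'$ may itself be a satellite so that $M_{K\cup L}$ can contain several essential tori, and the isotopy taking $K(q)$ to $K(0)$ has no a priori reason to carry $V$ to $V$. This is where the paper does most of its work: it shows essential tori in $\overline{V-\eta(K\cup L)}$ behave coherently under the two fillings (the Claim, using Gabai again, plus Lemmas \ref{lem:KinV2} and \ref{lem:KinV} --- the latter also being what you need, and omit, to know $K(q)$ is still geometrically essential in $V$), passes to an innermost admissible torus, and applies Motegi's dichotomy (Lemma \ref{thm:motegi}), ruling out the knotted-ball alternative, to produce an orientation-preserving homeomorphism $\Phi$ with $\Phi(K(q))=K(0)$ and $\Phi(V)=V$. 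Only then does $h=f^{-1}\circ\Phi\circ f$ preserve the longitude of $V'$, which is what makes $K'(q)$ and $K'(0)$ isotopic in $S^3$ rather than merely in $V'$.

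The second gap is the step you yourself flag as the main obstacle, and your proposed fix does not work. Granting $K'(q)\cong K'(0)$ and hence (since $K'\in\K$) that $L'$ is nugatory in $S^3$, you must promote the disk $E'\subset S^3-\eta(K')$ to a disk in $V'-\eta(K')$ before $f$ can transport it. No innermost argument achieves this: $\overline{S^3-V'}$ is an unknotted solid torus, a preferred longitude of $V'$ bounds a meridian disk there, and $E'$ can make genuinely irremovable longitudinal excursions --- for instance when the sphere $D'\cup E'$ realizes a nontrivial connected-sum decomposition of $K'$. Your fallback of excluding the residual configuration ``using primeness and non-cabledness of $C$'' is a category error: $E'$ lives entirely in the pattern picture, where the satellite map $f$ is undefined on $\overline{S^3-V'}$, so no property of the exterior of $C$ constrains it. The paper closes exactly this gap with Lemma \ref{lem:nugasat}, whose proof requires the longitude-preserving homeomorphism $h$ produced by the Motegi step and then McCullough's theorem (a homeomorphism of a compact orientable $3$-manifold restricting on the boundary to a Dehn twist along a curve forces that curve to bound a disk in the manifold) to manufacture the disk for $L'$ inside $V'$. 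Without an ingredient of this kind --- and without the homeomorphism $h$ that feeds it --- your argument cannot be completed as outlined.
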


Theorem \ref{thm:pattern1} has the following corollary.

\begin{cor}\label{cor:Wd} Let $K$ be a prime, non-cable knot. 
Then no Whitehead double of $K$ admits a cosmetic generalized crossing change of any order.
\end{cor}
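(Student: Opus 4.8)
The plan is to exhibit an arbitrary Whitehead double of $K$ as a satellite of the form considered in Theorem \ref{thm:pattern1} and then to verify the three hypotheses of that theorem one at a time. Set $C=K$; by hypothesis $C$ is prime and non-cable, and since the convention that a knot be prime already excludes the unknot, $C$ is non-trivial as required. Let $V'$ be a standardly embedded solid torus in $S^3$ and let $K'\subset V'$ be the Whitehead pattern, carrying whatever number $t$ of twists defines the given double, so that the Whitehead double $W$ of $K$ is by construction the satellite of $C$ with pattern $(V',K')$. Here $W$ plays the role of the satellite knot denoted $K$ in the statement of Theorem \ref{thm:pattern1}.

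It then remains to confirm that $(V',K')$ satisfies the theorem's requirements: that $K'$ is geometrically essential in $V'$, that $w(K',V')=0$, and that $K'\in\K$. First I would observe that the Whitehead pattern meets every meridian disk of $V'$ in exactly two points of opposite sign, since the strand runs around the solid torus and clasps back on itself. Thus the geometric intersection number is $2\neq 0$, so $K'$ is geometrically essential, while the algebraic intersection number cancels, giving $w(K',V')=0$.

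The one point that requires genuine identification rather than bookkeeping is the claim $K'\in\K$. Here I would view $K'$ as a knot in $S^3$ via the standard, unknotted embedding of $V'$; concretely this realizes $K'$ as the $t$-twisted Whitehead double of the unknot. When $t=0$ this is the unknot, and when $t\neq 0$ it is a twist knot. The unknot lies in $\K$ by the result of Scharlemann and Thompson recalled in the introduction, and every twist knot is a $2$-bridge knot and hence lies in $\K$ by Torisu's theorem; so $K'\in\K$ in every case.

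With all three hypotheses verified, Theorem \ref{thm:pattern1} applies directly to the pattern $(V',K')$ and the companion $C=K$, and shows that the satellite $W$ admits no cosmetic generalized crossing change of any order, which is precisely the assertion of the corollary. The only step that is not entirely formal is the identification of the pattern $K'$, under the standard embedding of $V'$, with the unknot or a twist knot lying in $\K$; once that is granted, the remaining content is the computation of the two intersection numbers and an appeal to the theorem.
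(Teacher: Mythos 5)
Your proof is correct, but it packages the satellite data genuinely differently from the paper. The paper fixes a \emph{single} pattern $(V',U)$ with $U$ the unknot for every Whitehead double, absorbing the $t$ twists into the satellite embedding $f:V'\to S^3$; this is legitimate because the paper's definition of a satellite does not require $f$ to carry the preferred longitude of $V'$ to a $0$-framed curve, and it means the only input needed for the pattern to lie in $\K$ is the Scharlemann--Thompson result for the unknot. You instead keep the $t$ twists in the pattern, so your $K'$, viewed in $S^3$ via the standard embedding of $V'$, is the $t$-twisted double of the unknot --- the unknot for $t=0$ and a twist knot for $t\neq 0$ --- and you must additionally invoke Torisu's $2$-bridge theorem to place it in $\K$. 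Both routes verify the same three hypotheses of Theorem \ref{thm:pattern1} (geometric essentiality, $w(K',V')=0$, $K'\in\K$); the difference is only whether the twisting lives in the framing of $f$ (paper) or in the pattern (you), and your version has the mild advantage of never needing to remark that satellite embeddings may be framed nontrivially. The one divergence of substance is the unknotted companion: you dismiss it by the convention that a prime knot is non-trivial, whereas the paper hedges on that convention and treats the case $K$ the unknot separately, observing that its Whitehead doubles are twist knots, hence $2$-bridge, hence handled by Torisu (amusingly, exactly the knots that appear as your patterns). Under the standard convention your step is fine, but since Theorem \ref{thm:pattern1} explicitly requires the companion $C$ to be non-trivial, a reader whose convention admits the trivial knot as prime would find a gap there that the paper's extra case closes; it would cost you one sentence to close it the same way.
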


To outline the main ingredients of the proof of Theorem \ref{thm:pattern1}, let $K$ be a satellite knot that has zero winding number in a companion solid torus $V$ with pattern $K' \in \K$.
First we use results of Gabai \cite{gabai} to prove that a cosmetic crossing disk $D$ of $K$
can be isotoped to lie inside $V$. Using this and properties of toroidal decompositions of knot complements, we argue that an order-$q$
cosmetic crossing disk $D$ of $K$ gives an order-$q$ crossing change of $K'$, that yields a knot isotopic to $K'$.
Then, using a result of McCullough \cite{mccullough},  we show that the fact that $D$ is a cosmetic crossing disk for $K$
implies that $D$ is also cosmetic for $K'$. 
This, in turn, contradicts the assumption that $K' \in \K$ and proves Theorem \ref{thm:pattern1}.

The paper is organized as follows: In section two we study the interplay between cosmetic crossing disks and companion tori with zero
winding number, and we prove the auxiliary results needed for the proof of Theorem
\ref{thm:pattern1}. In section three we prove Theorem
\ref{thm:pattern1} and Corollary \ref{cor:Wd}.

\smallskip

\section{Crossing circles, companion tori and isotopies}\label{s:background}

We begin by recalling some definitions.

\begin{defn}{\rm Let $V'$ be a standardly embedded solid torus in $S^3$, and let $K'$ be a knot embedded in $V'$ so that $K'$ is geometrically essential in $V'$ and not
the core of $V'$.  A solid torus $V \subset S^3$ is \emph{knotted}  if the core of $V$ is not isotopic to the unknot in $S^3$.
 
Let  $f:(V', K') \to  S^3$ be an embedding such that the solid torus  $V=f(V')$ is knotted.  
A \emph{satellite knot} with \emph{pattern} $K'$ is the image $K = f(K')$.  If $C$ is the core of $V$, then $C$ is a \emph{companion knot}
of $K$, and we may call $K$ a \emph{satellite of $C$}.  The torus $T=\partial V$ is a \emph{companion torus} of $K$.  We may similarly define a \emph{satellite link} if $K'$ is a non-split link.

A \emph{cable} knot is a torus knot or a satellite knot with  pattern a torus knot.}
\end{defn}

Given a 3-manifold $N$ and a submanifold $F \subset N$ of co-dimension 1 or 2, $\eta (F)$ will denote a regular neighborhood of $F$ in $N$.  For a knot or link $K \subset S^3$, we define $M_K = \overline{S^3 - \eta(K)}$.  

Given a knot $K$,  let $K(q)$ denote the knot obtained via an order-$q$ generalized crossing change at a crossing circle $L$. We will also use the notation $K(0)$ when we wish to be clear that we are referring to the embedding of $K$ in $S^3$ before any crossing change occurs. Setting $M_{K \cup L}=\overline{ S^3- \eta(K\cup L)}$,
we will let $M(q)$ denote the 3-manifold obtained from $M_{K \cup L}$ via a Dehn filling of slope $(-1/q)$ along $\partial \eta (L)$.  So for $q \in \Z - \{ 0 \},\ M(q) = M_{K(q)}$ and $M(0) = M_K$.  

The first lemma which we will need in the proof of the results stated in the introduction is the following.

\begin{lem}\label{lem:LinV}
Let $K$ be a satellite knot, $T$ be a companion torus for $K$,  and $V$ be the solid torus bounded by $T$ in $S^3$.
Suppose that  $w(K, V) = 0$ and that there are no essential annuli in $\overline{S^3 - V}$.   If $D$  is a cosmetic crossing disk 
 for $K$ we can isotope it so that it lies in $V$.
\end{lem}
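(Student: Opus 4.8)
The plan is to put $D$ in general position with respect to $T$ and then remove the curves of intersection $\Gamma := D \cap T$ until $D$ lies entirely on the side of $T$ containing $K$, namely in $V$. Two incompressibility facts drive the argument. Since the core $C$ of $V$ is a non-trivial knot, $V$ is knotted and $T$ is incompressible in $\overline{S^3-V}$; and since $K$ is geometrically essential in $V$, any compressing disk for $T$ inside $V$ would be a meridian disk disjoint from $K$, which is impossible, so $T$ is incompressible in $V-\eta(K)$ as well. I will also use repeatedly that $S^3$ is irreducible. After a small isotopy $\Gamma$ is a disjoint union of simple closed curves in $\mathrm{int}(D)$, together with arcs meeting $L=\partial D$ should $L$ meet $T$; the arcs I dispose of first by outermost-arc isotopies of $L$ that lower $|L\cap T|$.

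The key preliminary observation uses $w(K,V)=0$. The curves of $\Gamma$ cut $D$ into planar regions, each lying in $V$ or in $\overline{S^3-V}$. I claim every region that is a meridian disk, and every region that is an annulus cobounded by two meridians of $V$, meets $K$ in \emph{algebraically} zero points: the algebraic intersection of $K$ with any meridian disk of $V$ equals $w(K,V)=0$, and the count across such an annulus is the difference of two such numbers, hence also zero. Thus each of these regions meets $K$ in an \emph{even} number of points; since $K$ meets $D$ in only two points, a single region $P_0\subset V$ contains both points of $K\cap D$, while every other region of $D\setminus\Gamma$ is disjoint from $K$. In particular every region lying in $\overline{S^3-V}$ is clean. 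This localization is what lets all the isotopies below be performed away from $K$.

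Now I clear $\Gamma$. Take a circle $c$ innermost on $D$, bounding a subdisk $\Delta\subset D$ with $\mathrm{int}(\Delta)\cap T=\varnothing$, so $\Delta$ lies in $V$ or in $\overline{S^3-V}$. If $c$ is inessential on $T$, then (choosing the companion disk on $T$ innermost) I isotope $\Delta$ across $T$ by irreducibility to delete $c$, lowering $|\Gamma|$. If $c$ is essential, then $\Delta\subset\overline{S^3-V}$ is impossible, as it would compress the incompressible $T$; hence $\Delta\subset V$ is a meridian disk, $c$ is a meridian of $V$, and since all essential curves of $\Gamma$ are mutually parallel on $T$ they are \emph{all} meridians. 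Consecutive meridians along $D$ cobound annuli lying alternately in $V$ and in $\overline{S^3-V}$. Any such annulus $A\subset\overline{S^3-V}$ is clean and, by incompressibility of $T$ together with the hypothesis that $\overline{S^3-V}$ contains no essential annuli, is boundary-parallel; I isotope $A$ across $T$ to cancel its two meridian boundary curves. Iterating removes all meridian curves; if $L$ lies in $\overline{S^3-V}$ its region reduces to a clean annulus joining $L$ to a single meridian, along which $L$ is pushed across $T$ into $V$. Once $\Gamma=\varnothing$ the connected disk $D$ meets $K\subset V$, so $D\subset V$, as required.

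The main obstacle is exactly the essential curves of $\Gamma$: an innermost essential curve cannot be capped off on $T$, so the whole argument turns on showing that the annuli these curves cobound in $\overline{S^3-V}$ are inessential, which is precisely where the no-essential-annuli hypothesis is indispensable, and where the winding-number-zero normalization of the second paragraph is needed to guarantee that those annuli (and the region carrying $L$) are disjoint from $K$, so that the boundary-parallel isotopies are genuine isotopies of the \emph{crossing} disk. The remaining work is bookkeeping: verifying that each move strictly decreases $|\Gamma|$ or $|L\cap T|$ so that the process terminates, and checking the outermost-arc reductions used to eliminate the arcs at the outset.
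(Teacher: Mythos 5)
Your proposal never uses the hypothesis that the crossing disk is \emph{cosmetic}; as written it would prove that \emph{every} crossing disk for $K$ can be isotoped into $V$, a much stronger statement than the lemma, and that is a warning sign that innermost-curve combinatorics alone cannot suffice. The concrete failure is in your innermost-circle step. Your localization argument only establishes cleanliness for regions of $D\setminus\Gamma$ that are meridian disks, annuli cobounded by meridians, or lie in $\overline{S^3-V}$ (even this needs patching: the clean statement is that $w(K,V)=0$ makes $K$ null-homologous in $V$, so $\mathrm{lk}(c,K)=0$ for every curve $c\subset T$, whence every region meets $K$ evenly, i.e., in $0$ or $2$ points). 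But nothing in your argument prevents the one region $P_0$ carrying both points of $K\cap D$ from itself being an innermost disk $\Delta\subset V$ whose boundary is \emph{inessential} on $T$. That configuration is realizable: take a ball $B\subset V$ with $\partial B=\Delta\cup\Delta'$, $\Delta'\subset T$, meeting $K$ in a single arc, and let $D$ run from $L$ through $\overline{S^3-V}$ and dip into $V$ through $\Delta$ to clasp the two strands. Then the sphere $\Delta\cup\Delta'$ meets $K$ in two points, \emph{both} complementary balls meet $K$, and your move ``isotope $\Delta$ across $T$ by irreducibility'' necessarily drags $D$ through $K$, changing $|K\cap D|$ and destroying the crossing disk. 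Your toolkit has no move for this case, and ruling it out is precisely where cosmetic-ness must enter; it cannot be recovered by bookkeeping.

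The paper's proof is structured to avoid this problem entirely, and it is genuinely different from your route. Since $\mathrm{lk}(K,L)=0$, one takes a minimal genus Seifert surface $S$ for $K$ in $\overline{S^3-\eta(L)}$ meeting $D$ in a single arc; cosmetic-ness gives both that $M_{K\cup L}$ is irreducible (otherwise $L$ would be nugatory) and that $K(q)$ is isotopic to $K$, so Gabai's result \cite[Corollary 2.4]{gabai} forces $S$ to be of minimal genus in $S^3$. An Euler-characteristic swap, replacing $S\cap\overline{S^3-V}$ by the annuli in $T$ bounded by $S\cap T$ (this is where $w(K,V)=0$ is used, playing the homological role it plays in your sketch), shows $S\cap\overline{S^3-V}$ consists of annuli; these are boundary-parallel by the no-essential-annuli hypothesis, so $S$ can be pushed into $V$, and $D$, isotoped into a neighborhood of $S$, is carried along. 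In short: where you try to push the twice-punctured disk across $T$ directly and get stuck exactly at the punctures, the paper pushes an entire minimal genus Seifert surface across, and the sutured-manifold input from Gabai is the missing ingredient that your blind attempt has no substitute for.
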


\begin{proof}
Let $K$ be as in the statement of the lemma, and suppose that $D$ is an order-$q$ cosmetic crossing disk
with $L=\partial D$.  Since $\text{lk}(K, L)=0$,
$K$ bounds s Seifert surface in the $\overline{S^3 - \eta(L)}$.
Let $S$ be a minimal genus  Seifert surface for $K$ in $\overline{S^3 - \eta(L)}$.
The intersection $S \cap D$  consists of a single embedded arc $\alpha$ and a collection of simple closed curves. Simple closed curves
that are parallel to $\partial D$ can be eliminated by sliding $S$ off the boundary of $D$, while simple closed curves that bound disks
in $D- \alpha$ can be eliminated by isotopy of $S$ in $\overline{S^3 - \eta(L)}$.
Hence we may choose  $S$ so that $S \cap D$ is a single embedded arc $\alpha$.  Then performing $(-1/q)$-surgery at $L$ twists both $K$ and $S$ and produces a surface $S(q) \subset M(q)$ which is a Seifert surface for $K(q)$.  

Note that if $M_{K \cup L}$ were reducible, then $M_{K \cup L}$ would contain a separating 2-sphere which does not bound a 3-ball $B \subset M_{K \cup L}$.  Then $L$ would lie in a 3-ball disjoint from $K$.  Hence $L$ would bound a disk in this 3-ball, which is in the complement of $K$, so $L$ would be nugatory.  Since $L$ is cosmetic by assumption, we may conclude that $M_{K \cup L}$ is, in fact, irreducible.  By a result of Gabai
\cite[Corollary 2.4]{gabai}
at least one of $S$, $S(q)$ is of minimal genus for $K$, $K(q)$, respectively, in $S^3$. However, $K$ and $K(q)$ are isotopic,
and  the genus of $S$ is equal to that of $S(q)$. Thus both  $S$ and $S(q)$ are minimal genus surfaces for their respective knots.

We may isotope $D$ to lie in a neighborhood of $S$ so that if
$S \subset V$, then we may arrange that $D \subset V$.  Assume that $S \not\subset V$, and let $\C = S \cap T$.  We may isotope $S$ so that $\C$ is a collection of simple closed curves which are essential in both $S$ and $T$.  Since $w(K,V)=0$, $\C$ must be homologically trivial in $T$, where each component of $\C$ is given the boundary  orientation  from $S\cap V$.  Hence $\C$ bounds a collection of annuli in $T$ which we will denote by $A_0$. 

Let $S_0 = S - (S \cap V)$.  Suppose that $\chi (S_0) < 0$, where $\chi( \cdot )$ denotes the Euler characteristic.  We may create $S^*$ from $S$ by replacing $S_0$ by $A_0$, isotoped slightly, if necessary, so that the components of $A_0$ become disjoint.  Then $S^*$ is a Seifert surface for $K$, and $\chi (S^*) > \chi (S)$ since $\chi (A_0) = 0$.  This contradicts the fact that $S$ is a minimal genus Seifert surface for $K$, so it must be that $\chi (S_0) \geq 0$.  Since $S_0$ contains no closed component, and no component of $\C$ bounds a disk in $S$, we conclude that $S_0$ consists of annuli.

By assumption, there are no essential annuli in $\overline{S^3 - V}$, so each component of $S_0$ must be boundary parallel in $\overline{S^3 - V}$.  Thus we can isotope $S_0$ so that $S \subset V$, and therefore $D$ can be isotoped into $V$ as well. \end{proof}

The hypotheses in the statement of Lemma \ref{lem:LinV} assure that a cosmetic crossing disk can be isotoped to lie inside a companion torus. In particular, such a crossing disk is disjoint from the corresponding companion torus.
In the next three lemmas we deal with cosmetic crossing circles that are disjoint from companion tori. The first two
lemmas examine how companion tori of a knot $K$, that are disjoint from a crossing circle $L$,
behave under crossing changes along $L$ that don't change the isotopy class of $K$.

\begin{lem}\label{lem:KinV2} Suppose that $L$ is a nugatory crossing circle of order $q$ for  a knot $K$
and let   $F$ be an essential torus in $M_{K\cup L}=\overline{ S^3 -  \eta(K \cup L)}$. If $F$ becomes compressible
in one of $M(q)$ or $M(0)$ then it becomes compressible in both of them.
\end{lem}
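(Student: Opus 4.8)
The plan is to exploit the fact that a nugatory crossing circle makes the $(-1/q)$-surgery a \emph{twist along a disk disjoint from $K$}, so that $M(q)$ and $M(0)$ are homeomorphic by a map supported in a ball missing $K$; the whole argument then reduces to moving $F$ off that ball. Since $L$ is nugatory, $\partial\eta(L)$ compresses in $M_{K\cup L}$ along its longitude, bounding a disk $\Delta$; capping off gives a disk $\Delta_0\subset S^3$ with $L=\partial\Delta_0$ and $\Delta_0\cap K=\emptyset$. I would thicken this to a ball $B=\eta(\Delta_0)$ with $L\subset\mathrm{int}(B)$ and $K\cap B=\emptyset$, so that $\Sigma=\partial B$ is a $2$-sphere separating $L$ from $K$. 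In particular $M_{K\cup L}$ is reducible, and the $L$-side $W_L:=B-\eta(L)$ is a solid torus (with an extra spherical boundary component), while the $K$-side $X:=M_{K\cup L}-\mathrm{int}(B)$ carries the boundary torus $\partial\eta(K)$.

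Next I would produce the homeomorphism. Because $L$ is an unknot bounding $\Delta_0$ inside $B$, a Rolfsen twist along $\Delta_0$ identifies the $(-1/q)$-filled version of $B$ with $B$ itself by a map that is the identity on $\Sigma$. Writing $M_{K\cup L}=X\cup_\Sigma W_L$ and filling only along $\partial\eta(L)\subset W_L$, this yields a homeomorphism $\psi_q\colon M(q)\to M(0)$ that is the identity on $X$ and the $q$-fold twist on the $B$-side. Consequently, for any surface that can be isotoped into $X$, its compressibility in $M(q)$ and in $M(0)$ coincide, so it suffices to isotope $F$ off $B$.

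To move $F$ into $X$, I would isotope $F$ to meet $\Sigma$ minimally. An innermost circle of $F\cap\Sigma$ on $\Sigma$ bounds a disk $E\subset\Sigma$ with interior disjoint from $F$; incompressibility of $F$ forces $\partial E$ to bound a disk $E'\subset F$, which I would take innermost so that $\mathrm{int}(E')\cap\Sigma=\emptyset$. Then $E\cup E'$ bounds a ball $B'$ lying entirely on one side of $\Sigma$, hence containing at most one of $K,L$. When $B'$ misses $K\cup L$ the circle is cancelled by an isotopy across $B'$ within $M_{K\cup L}$, contradicting minimality; the only circles left are those whose ball $B'\subset B$ encircles $L$, and these I would remove using the solid-torus structure of $W_L$ (an incompressible, interior-disjoint disk of $F$ in the solid torus $W_L$ must be boundary-parallel). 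Once $F\subset X$, we have $\psi_q(F)=F$, so any compressing disk for $F$ in $M(q)$ is carried by $\psi_q$ to one for $F$ in $M(0)$, and $\psi_q^{-1}=\psi_{-q}$ gives the reverse implication, establishing the asserted equivalence.

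The step I expect to be the main obstacle is precisely this isotopy of $F$ off the nugatory sphere. Since $L$ nugatory forces $M_{K\cup L}$ to be \emph{reducible}, one cannot simply invoke irreducibility to cancel the circles of $F\cap\Sigma$ by the usual innermost-ball argument: the circles bounding balls that enclose $L$ are exactly the delicate ones, and they are the geometric source of the compressibility the lemma compares. Handling them correctly—eliminating them via the solid-torus structure of $W_L$ while keeping every cancelling isotopy inside the link complement $M_{K\cup L}$—is where the real work of the proof lies.
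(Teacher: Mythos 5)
You read the hypothesis literally---$L$ nugatory---and built everything on the disk $\Delta_0$ disjoint from $K$: the separating sphere $\Sigma$, the Rolfsen twist $\psi_q$ supported in $B$, and the reduction to isotoping $F$ into the $K$-side $X$. The paper's own proof is entirely different, and the discrepancy matters: despite the word ``nugatory'' in the statement, the paper's argument begins by taking, via the proof of Lemma \ref{lem:LinV}, a minimal genus Seifert surface $S$ for $K$ in $\overline{S^3-\eta(L)}$ meeting the \emph{cosmetic} crossing disk $D$ in a single arc, and the lemma is invoked in the proof of Theorem \ref{thm:pattern1} for a crossing circle assumed cosmetic; so ``nugatory'' in the statement is evidently a slip for ``cosmetic.'' Under the intended hypothesis your construction fails at the very first step: a cosmetic (hence non-nugatory) circle $L$ bounds no disk in $S^3-\eta(K)$, there is no sphere separating $L$ from $K$ (indeed $M_{K\cup L}$ is shown to be \emph{irreducible} in Lemma \ref{lem:LinV}), and no homeomorphism $M(q)\to M(0)$ supported away from $K$ is available. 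The paper's mechanism is dual to yours: rather than moving the torus off the surgery region, it moves the compressing disk off the surgery curve. Since $L$ lies in a neighborhood of $S$, and $S$ (respectively $S(q)$) is minimal genus, hence incompressible, by Gabai's result, a given compressing disk $E$ for $F$ can be isotoped off $S$ (using irreducibility of $M(0)$ and an innermost-circle argument), hence off $L$, after which $E$ survives \emph{every} Dehn filling along $\partial\eta(L)$.

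Even granting the literal nugatory hypothesis, under which your overall strategy is viable, the middle step has concrete errors, partly where you yourself anticipated trouble. For a circle of $F\cap\Sigma$ innermost on $\Sigma$, the disk $E'\subset F$ with $\partial E'=\partial E$ is the \emph{unique} disk on the torus $F$ bounded by that circle; you cannot ``take it innermost'' so that $\mathrm{int}(E')\cap\Sigma=\emptyset$, and in general $E'$ crosses $\Sigma$. Moreover the sphere $E\cup E'$ bounds two balls in $S^3$, and the one you need to push across may contain $K$ (not only $L$) or further sheets of $F$, so the cancelling isotopy is not automatic; and the parenthetical claim that an interior-disjoint disk of $F$ in $W_L$ ``must be boundary-parallel'' is not a correct statement as it stands. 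The clean way to complete your route is to note that $\Sigma$ exhibits $M_{K\cup L}\cong M_K\,\#\,(S^1\times D^2)$ and to invoke the standard fact that a closed incompressible surface in a connected sum is isotopic into one summand; since a handlebody contains no closed incompressible surface at all ($\pi_1$ is free, so $\mathbb{Z}^2$ does not inject), $F$ must land in $X$, and then your $\psi_q$ argument does finish cleanly. But even so patched, your proof establishes only the (nearly vacuous) nugatory version, not the cosmetic statement that the paper proves and actually uses in Case 1 of the Claim in Theorem \ref{thm:pattern1}.
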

\begin{proof}
By the proof of Lemma \ref{lem:LinV}, we have  
minimal genus surface $S$ of $K$ that intersects the cosmetic  crossing disk $D$, bounded by $L$,  at a single arc $\alpha$. In fact, we may take
$D$ to lie a neighborhood of $\alpha$ contained in a neighborhood of $S$.  Furthermore, the surface $S(q)$, obtained  after the cosmetic crossing change, is
of minimal genus for $K(q)$.

 Let $F$ be an essential torus in $M_{K\cup L}$. Suppose that $F$ becomes compressible in
$M(0)=\overline{ S^3 -  \eta(K)}$; the argument in the case that  $F$ becomes compressible in
$M(q)=\overline{ S^3 -  \eta(K(q))}$ is completely analogous.

 Let $E$ be a compressing disk for $F$ in $M(0)$. We have $K\cap  E=\emptyset$. Hence the intersection $S\cap E$, if non-empty,  is a collection of simple
closed curves.
Since $S$ is minimal genus for $K$, 
it is incompressible in $M(0)$. Since $M(0)$ is irreducible, by an innermost argument we may
isotope $S$ in $M(0)$ so that
$E\cap S=\emptyset$. Since $L$ lies in a neigborhood of $S$ we may arrange so that
$L\cap E=\emptyset$. Thus $E$ will survive as a compressing disk of $F$ in any manifold obtained  Dehn filling along $L$.
In particular  $F$ will remain compressible in $ \overline{S^3- \eta(K(q))}$.

\end{proof}

The following lemma is proved by arguments similar to those in the proofs of Lemma \ref{lem:LinV}.

\begin{lem}{ \rm {\cite[Lemma 4.6] {kalf-lin}}}\label{lem:KinV}
Let $V \subset S^3$ be a knotted solid torus such that $K \subset \rm{int}(V)$ is a knot which is geometrically essential in $V$ and $K$ has a crossing disk $D$ with $D \subset \rm{int}(V)$.  If $K$ is isotopic to $K(q)$ in $S^3$, then $K(q)$ is also geometrically  essential in $V$.  Further, if $K$ is not the core of $V$, then $K(q)$ is also not the core of $V$.
\end{lem}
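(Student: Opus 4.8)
The plan is to recast both conclusions as statements about the companion torus $T=\partial V$ and to transport them across the crossing change using (the argument of) Lemma \ref{lem:KinV2}. Since the crossing disk $D$, and hence the entire $(-1/q)$--surgery along $L=\partial D$, is supported inside $V$, the twisted knot $K(q)$ again lies in $\mathrm{int}(V)$, so it makes sense to ask whether it is geometrically essential and whether it is the core. The dictionary I would use is this: because $V$ is knotted, $T$ is always incompressible toward $\overline{S^3-V}$, so $T$ is incompressible in $M_K$ exactly when no meridian disk of $V$ is disjoint from $K$, i.e. exactly when $K$ is geometrically essential; and $T$ is boundary--parallel in $M_K$ (toward $\partial\eta(K)$) exactly when $V\setminus\eta(K)\cong T^2\times I$, i.e. exactly when $K$ is the core. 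The same dictionary applies to $K(q)$ in $M(q)$, since $K(q)\subset\mathrm{int}(V)$.

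First I would verify that $T$ is essential in $M_{K\cup L}$. Incompressibility is immediate from the dictionary: a compressing disk from the $V$ side would be a meridian disk of $V$ missing both $K$ and $L$, contradicting that $K$ is geometrically essential. Moreover $T$ cannot be boundary--parallel in $M_{K\cup L}$, because the region it would cut off toward $\partial\eta(K)$ still contains $\partial\eta(L)$ and so is not a product. For the first conclusion, observe that since $K$ is geometrically essential, $T$ is incompressible in $M(0)$; applying Lemma \ref{lem:KinV2} with $F=T$ (its proof uses only that $K$ is isotopic to $K(q)$), $T$ remains incompressible in $M(q)$. By the dictionary this says precisely that $K(q)$ is geometrically essential in $V$.

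For the second conclusion I would argue by contradiction, assuming $K$ is not the core while $K(q)$ is. By the dictionary, $T$ is then essential (incompressible and not boundary--parallel) in $M(0)$, whereas $T$ is boundary--parallel in $M(q)$, cutting off a product region $P\cong T^2\times I$ between $T$ and $\partial\eta(K(q))$; note $L\subset P$ and $\mathrm{lk}(K(q),L)=0$ force $[L]=0$ in $H_1(P)$. The goal is to transfer this product structure back across the crossing change so as to conclude that $V\setminus\eta(K)$ is also a product, contradicting that $K$ is not the core. Concretely, I would split into two cases. If $L$ bounds a disk in $P$ (which is what $[L]=0$ forces whenever $L$ is isotopic into a level torus, since no essential curve on $T^2$ is null-homologous), then $L$ is nugatory for $K(q)$; the order $-q$ twist is undone by an isotopy supported near that disk inside $V$, so $K=K(q)(-q)$ is again the core, a contradiction. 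If instead $L$ is essential in $P$, I would rerun the minimal--genus Seifert surface argument of Lemma \ref{lem:LinV} inside $P$: a minimal--genus surface $S(q)$ for $K(q)$ meets $T$ in curves bounding annuli in $P$, and the resulting control on $S=S(q)$ reproduces the product structure on the $K$ side.

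The routine part is the incompressibility transfer for the first conclusion, which is a direct application of Lemma \ref{lem:KinV2}. The main obstacle is the second conclusion: Lemma \ref{lem:KinV2} propagates only \emph{compressibility} of $T$, not boundary--parallelism, so detecting that $K(q)$ is not the core requires genuinely more. The delicate step is ruling out the case in which $L$ is essential in the product region $P$; there one must use the hypothesis $K\simeq K(q)$ together with a Gabai--style minimal--genus surface argument to force the twisting to be inessential, or else invoke uniqueness of the companion (JSJ) torus for the homeomorphism $M_K\cong M_{K(q)}$ to conclude that a nontrivial pattern piece on the $K$ side cannot collapse to a product on the $K(q)$ side.
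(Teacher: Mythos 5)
You should know at the outset that the paper contains no proof of this lemma: it is imported from \cite[Lemma 4.6]{kalf-lin}, with only the remark that it ``is proved by arguments similar to those in the proofs of Lemma \ref{lem:LinV}'', i.e.\ by the minimal-genus Seifert surface machinery and Gabai's surgery result. Your first conclusion follows that intended route and is essentially sound: your dictionary (geometric essentiality of $K$ in $V$ $\Leftrightarrow$ incompressibility of $T=\partial V$ in $M_K$, given $V$ knotted; $K$ the core $\Leftrightarrow$ $T$ boundary-parallel) is correct, and the transfer of incompressibility across the filling is exactly the content of Lemma \ref{lem:KinV2}, whose hypothesis ($T$ essential in $M_{K\cup L}$) you do verify. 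One inaccuracy there: the proof of Lemma \ref{lem:KinV2} does \emph{not} use ``only that $K$ is isotopic to $K(q)$'' --- it needs $M_{K\cup L}$ irreducible so that Gabai's Corollary 2.4 makes both $S$ and $S(q)$ minimal genus, hence incompressible, hence isotopable off a compressing disk. Irreducibility fails precisely when $K\cup L$ is split, i.e.\ when $L$ is nugatory; since the present lemma assumes only $K\simeq K(q)$ in $S^3$, the nugatory case requires a separate (if short) argument that you do not supply.

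The genuine gap is in the second conclusion, and you flag it yourself without closing it. Having reduced to the situation where $T$ is boundary-parallel in $M(q)$ with product region $P\cong T^2\times I$ and $[L]=0$ in $H_1(P)$, you resolve only the case in which $L$ bounds a disk in $P$. But $[L]=0$ does not force that case: a circle in $T^2\times I$ can be null-homologous and unknotted in $S^3$ yet bound no disk in $P$ (Whitehead-clasp--style curves), and this is exactly the configuration a proof must exclude. Your fallback for it --- rerunning the annuli argument of Lemma \ref{lem:LinV} inside $P$ --- does not typecheck: that argument needs winding number zero so that $S\cap T$ is null-homologous in $T$, whereas here $K(q)$ is the \emph{core} of $V$, so $w(K(q),V)=1$ and a minimal genus Seifert surface for $K(q)$ meets $T$ in longitudinal, homologically essential curves bounding no annuli in $T$; note also that the lemma itself carries no $w(K,V)=0$ hypothesis (in the Claim of Theorem \ref{thm:pattern1} it is applied to the torus $\widehat F$ with no winding-number control, so you cannot quietly import one). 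The alternative you gesture at --- uniqueness of JSJ tori --- is likewise not carried out. So the hard half of the lemma, that $K(q)$ cannot collapse to the core, is asserted rather than proved; as you yourself observe, Lemma \ref{lem:KinV2} propagates only compressibility, and the missing step is precisely a Gabai-style transfer of full essentiality (ruling out boundary-parallelism as well) across the two fillings of $M_{K\cup L}$, which is what the citation to \cite{kalf-lin} is carrying.
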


The next lemma discusses the interplay of nugatory crossing changes with satellite operations.

\begin{lem}\label{lem:nugasat} Let $V'\subset S^3$ be a standardly embedded solid torus.
 Let $K'$ be a  knot that is geometrically essential in $V'$ and let $D'$ be a crossing disk for $K'$ that lies in $V'$. Suppose that there is an orientation preserving homeomorphism
$h: V' \to V'$ that takes the preferred longitude of $V'$ to itself and such that $h(K'(q))=K'(0)$.  Then if  $L'=\partial D'$ is nugatory for $K'$ in $S^3$,  it is also
nugatory for $K'$ in $V'$. 
\end{lem}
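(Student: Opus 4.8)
The plan is to start from the nugatory disk in $S^3$ and push it inside $V'$ by an innermost--disk analysis across the companion torus $T'=\partial V'$. Since $L'$ is nugatory for $K'$ in $S^3$, it bounds an embedded disk $E\subset M_{K'}=\overline{S^3-\eta(K')}$. Because $V'$ is standardly embedded, $W'=\overline{S^3-V'}$ is a solid torus; here the preferred longitude of $V'$ is precisely the meridian of $W'$, while the meridian of $V'$ is the longitude of $W'$. As $K'\subset\operatorname{int}(V')$, we may take $T'$ disjoint from $K'$ and from $\partial E=L'\subset\operatorname{int}(V')$, and put $E$ in general position so that $E\cap T'$ is a finite collection of disjoint simple closed curves; I then isotope $E$, rel $\partial E$ and within $M_{K'}$, so as to minimize the number of these curves.

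First I dispose of the trivial and meridional curves. Standard innermost--circle isotopies, carried out in the complement of $K'$ (which is disjoint from $T'$), remove every circle of $E\cap T'$ that bounds a disk in $T'$; so in minimal position all remaining curves are essential on $T'$, hence mutually parallel of a single slope $\gamma$. Choosing a circle that is innermost on $E$, the subdisk it bounds lies entirely in $V'$ or entirely in $W'$. If $\gamma$ were the meridian of $V'$, then an innermost subdisk in $V'$ would be a meridian disk of $V'$ disjoint from $K'$, contradicting that $K'$ is geometrically essential, while an innermost subdisk in $W'$ would exhibit the longitude of $W'$ as bounding in $W'$, which is impossible. Hence $\gamma$ is the preferred longitude of $V'$, i.e.\ the meridian of $W'$. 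In minimal position each component of $E\cap W'$ is then incompressible in the solid torus $W'$ with meridional boundary, so it is a meridian disk; thus $E\cap W'$ consists of parallel meridian disks $\delta_1,\dots,\delta_n$ of $W'$.

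The main obstacle is to remove these disks, since a meridian of $W'$ does not bound in $T'$ and so cannot be cancelled by an innermost--circle move on $T'$. Here I use the complementary solid--torus structure supplied by the standard embedding, together with the homeomorphism $h$: preserving the preferred longitude, $h$ acts trivially on $H_1(T')$ and extends across $W'$ by the identity, so the whole picture respects the decomposition $S^3=V'\cup_{T'}W'$. The key numerical input is that $L'=\partial D'$ bounds the crossing disk $D'\subset V'$, so $[L']=0$ in $H_1(V')$; orienting the curves $E\cap T'$ as the boundary of $E\cap V'$, their signed total is therefore zero, i.e.\ the disks $\delta_1,\dots,\delta_n$ occur with algebraically cancelling orientations as meridians of $W'$. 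Reading them in cyclic order around $W'$, some adjacent pair $\delta_i,\delta_{i+1}$ is oppositely oriented; the product ball of $W'$ that they cobound, together with a thin collar of the annulus between their boundaries in $T'$---a region disjoint from $K'$ by a collar of $T'$ in $\operatorname{int}(V')$---guides an isotopy of $E$, supported off $K'$, that removes both $\delta_i$ and $\delta_{i+1}$. This reduces $n$, contradicting minimality unless $n=0$.

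Once $E\cap T'=\emptyset$, the disk $E$ lies on the $V'$ side of $T'$ containing $\partial E$, so $E\subset V'$; thus $L'$ bounds an embedded disk in $V'-\eta(K')$ and is nugatory for $K'$ in $V'$. I expect the delicate point to be exactly the cancellation step for the longitudinal curves: one must check that the guiding isotopy keeps $E$ embedded and disjoint from $K'$, which is where the adjacency in $W'$, the sign balance forced by $[L']=0$, and the longitude--preserving homeomorphism $h$ all enter.
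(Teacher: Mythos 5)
Your reduction to the configuration where $E\cap T'$ consists of preferred longitudes of $V'$ and $E\cap W'$ consists of meridian disks $\delta_1,\dots,\delta_n$ of $W'$ with algebraically cancelling orientations is fine, but the step you yourself flag as delicate --- cancelling an adjacent oppositely-oriented pair $\delta_i,\delta_{i+1}$ by an isotopy guided by the product ball and the annulus $A_T\subset T'$ between their boundaries --- is not merely delicate: it fails. The move you describe replaces two disk components of $E\cap W'$ by (a push-off of) the annulus $A_T$, which drops the Euler characteristic of $E$ by $2$; starting from a disk it produces a once-punctured torus (or a disconnected surface), so it cannot be realized by an isotopy of $E$. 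An isotopy could remove the pair only if the two curves $\partial\delta_i,\partial\delta_{i+1}$ cobounded an annulus \emph{in} $E$ parallel into $T'$, and that is exactly what fails in the basic case $n=2$: there $E\cap V'$ is a pair of pants $P$ with boundary $L'\cup\partial\delta_1\cup\partial\delta_2$, and $D'\cup P$ is precisely the properly embedded annulus $A$ with longitudinal boundary that appears in the paper's proof (Figure \ref{fig:torus}). In that configuration there is in general no isotopy of $E$ into $V'$ at all, so no innermost/cancellation argument can terminate with $E\cap T'=\emptyset$; asserting that minimal position forces $n=0$ amounts to assuming the conclusion.

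The deeper symptom is that the hypothesis on $h$ never does any work in your argument: you invoke it only to say the picture ``respects the decomposition,'' while your cancellation uses only $[L']=0$ in $H_1(V')$, which holds automatically since $D'\subset V'$. If your proof were correct it would establish the lemma with the hypothesis on $h$ deleted, and that hypothesis is exactly what the paper's proof leans on. The paper never tries to isotope the nugatory disk $D''$ into $V'$; instead it keeps the planar piece $A\subset D'\cup(D''\cap V')$, extends $h$ to a homeomorphism $H$ of $S^3$ arranged to preserve the sphere $\Sigma=D'\cup D''$ and the pieces $V'_1,V'_2$, composes with the twist map supported in one of the balls bounded by $\Sigma$ so as to obtain a self-homeomorphism of $X=\overline{V'_2-\eta(K'\cap V'_2)}$ whose restriction to the boundary is a power of the Dehn twist along $L'$, and then applies McCullough's theorem \cite[Theorem 1]{mccullough} to conclude that $L'$ bounds a disk in $V'_2\subset V'-\eta(K')$. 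To repair your approach you would need to replace the cancellation step by an argument of this type; the homological sign balance and adjacency in $W'$ alone cannot produce the disk inside $V'$.
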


\begin{proof} Note that the existence of  $h$ as in the statement above implies that $K'(q)$  and $K'$ are ambiently isotopic in $V'$. 
Suppose $L'$ is nugatory.  Then $L'$ bounds a crossing disk $D'$ in $V'$ and another disk $D''$ in the complement of $K'$.  We may assume $D' \cap D'' = L'$.  

If $D''  \subset V$ then there is nothing to prove. Otherwise, let $A_{V'} = D' \cup (D''  \cap V')$.  Now $A_{V'}$
contains a component  $(A, \partial A)\subset  (V', \partial V')$  that is a
a properly embedded  planar surface in $V'$, so each component of  $\partial A $ is a preferred longitude of $V'$, and we have $D'\subset A$. 

Since $h$ takes the preferred longitude of $V'$ to itself,
there is an orientation preserving homeomorphism $H: S^3\to S^3$ such that $H|_{V'}= h$.
Up to isotopy on $\partial V'$ we may assume that  for every component $C\subset D''  \cap \partial V'$, we have
$H(C)=C$. 
The sphere $\Sigma=D'\cup D'' $ defines (possibly trivial) connect sum decompositions of $K'(q)$ and $K'(0)$.  Since $H$ preserves each decomposition we may arrange so that $H(\Sigma)=\Sigma$.
 Then $H$ maps each component of  $\Sigma \cap V'$ to itself; in particular $H(A)=A$. By further isotopy on $A$ we may arrange so that
$H$ leaves invariant each of $D'$, $D''$ and $V'$.
To summarize, we can find   ambient isotopy $\{ H_{t}\}_{0\leq t\leq 1}$ of $S^3$ so that: (i) $H_0=H$; (ii) $H_t(K'(q))=K'(0)$, for 
all $0\leq t\leq 1$; (iii) $H_1(D')=D'$, $H_1(D'' )=D'' $ and $H_1(V')=V'$.
Thus, by replacing our original $h$ with $H_1|_{V'}$, we may  assume $h(A)=A$ and $A$ cuts $V'$ into two components, $V'_1$ and $V'_2$.  An example where $A$ is an annulus is shown in Figure \ref{fig:torus}.

\begin{figure}
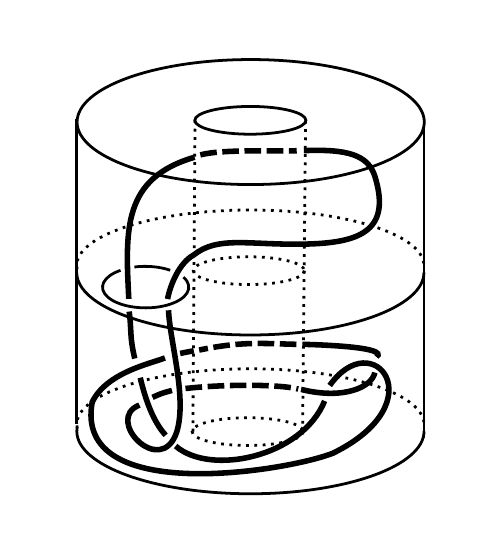
\caption{The solid torus $V'$, cut into two components by $A$, as in the proof of Lemma \ref{lem:nugasat}.}\label{fig:torus}
\end{figure}

\vskip 0.06in

{\it{ Case 1.} } Suppose that $ h( V'_i )= V'_i$, for $i=1,2$. 
Now $\Sigma= D' \cup D''  $ separates $S^3$ into two 3-balls, $B_1$ and $B_2$. Assume that  $V'_i \subset B_i$ for $i=1,2$.
On $A$ let $\tau^{-q}_{L'}$ denote the $-q$ power of the Dehn twist along $L'$.
If viewed as an auto-diffeomorphism of $\Sigma$, $\tau^{-q}_{L'}$ extends to an auto-diffeomorphism $T$of $B_2$
that is the identity off of a collar neighborhood of $B_2$. Define $f: S^3 \to S^3$ to be identity on $B_1$
and $T$ on $B_2$.
This is an orientation-preserving homeomorphism
that brings  $K'(q)$ to $K'(0)$.
Using $h$ as above we identify $V'_i$ with $h(V'_i)$ and $K'(0)$ with $K'(q)$ and simply denote it by $K'$.

Let $X$ be the 3-manifold obtained from $V'_2$ by drilling out a neighborhood of $K' \cap V'_2$. 
Now  $f$ restricted to $X$ 
 is a homeomorphism whose   restriction to $\partial V'_2$ is a Dehn twist along $L'$. 
  By a result of McCullough \cite[Theorem 1]{mccullough}, $L'$ bounds a disk in $V'_2 \subset (V' - \eta(K'))$.  Thus $L'$ is nugatory  in $V'$.

\vskip 0.06in

{\it{ Case 2.} } Suppose that $h: V' \to V'$ maps $ V'_2$ to $V'_1$. The sphere $\Sigma$ 
defines connect sum decompositions of $K'(q)$ and $K'(0)$. We have
$h(K'(q)\cap V'_1)=K'(0)\cap V'_2$ and $h(K'(q)\cap V'_2)=K'(0)\cap V'_1$.

On $A$ let $\tau^{-q}_{L'}$ denote the $-q$ power of the Dehn twist along $L'$ and let
$T$  its extension to $B_2$ as in Case 1. Define $f: S^3 \to S^3$ to be identity on $B_1$
and $T$ on $B_2$. Let $r: S^3\to S^3$ denote a rotation of 180 degrees with axis  a  circle  on $\Sigma$ passing through the two points comprising 
$\Sigma \cap K'(q)=\Sigma \cap K'(0)$. Up to isotopy in each of $B_1$ and $B_2$ we may assume that
$r(K'(q)\cap V'_1)=K'(0)\cap V'_2$ and $r(K'(q)\cap V'_2)=K'(0)\cap V'_1$.
Now define $g: S^3\to S^3$, by $g=f\circ r$. This is an orientation-preserving homeomorphism
with $g(K'(q)\cap V'_i)=K'(0)\cap V'_i$, for  $i=1,2$.
As in Case 1, using $h$  we identify $V'_i$ with $h(V'_i)$.
Now apply to $g$  an argument identical to this applied to $f$ above, and again appeal to McCullough's result to get the desired conclusion.

\end{proof}

\vskip 0.03in

 We close the section with the following lemma of Motegi  \cite{motegi} that we need for the proof of the main results.

\begin{lem}{\rm{ \cite[ Lemma 2.3]{motegi}}}\label{thm:motegi}
Let $K$ be a knot embedded in $S^3$ and let $V_1$ and $V_2$ be knotted solid tori in $S^3$ such that the embedding of $K$ is essential in $V_i$ for $i=1,2$.  Then there is an ambient isotopy $\phi: S^3 \to S^3$ leaving $K$ fixed such that one of the following holds.
\begin{enumerate}
\item $\partial V_1 \cap \phi (\partial V_2) = \emptyset$.
\item There exist meridian disks $D$ and $D'$ for both $V_1$ and $V_2$ such that some component of $V_1$ cut along $(D \sqcup D')$ is a knotted 3-ball in some (unknotted) component of $V_2$ cut along $(D \sqcup D')$. See Figure \ref{fig:knottedball}.
\end{enumerate}
\end{lem}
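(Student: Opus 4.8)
The final statement to prove is Lemma~\ref{thm:motegi} (Motegi's lemma), but the excerpt attributes it directly to \cite[Lemma 2.3]{motegi}, so the expected ``proof'' is a citation. Nevertheless, I describe how one would establish it from scratch.

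\medskip

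\textbf{Overall approach.} The plan is to put $\partial V_1$ and $\partial V_2$ into general position and then run a standard innermost-disk/minimal-intersection argument on the curves of $T_1 \cap T_2$, where $T_i = \partial V_i$. First I would isotope so that $C := T_1 \cap \phi(T_2)$ is a transverse intersection consisting of finitely many simple closed curves, and minimize the number of such curves over all ambient isotopies $\phi$ fixing $K$. If $C = \emptyset$ we are in conclusion (1), so assume $C \ne \emptyset$ and analyze what minimality forces.

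\medskip

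\textbf{Key steps.} Each curve of $C$ is either trivial (bounding a disk) or essential on each torus. The first step is to rule out, or resolve, trivial curves: if some curve of $C$ bounds a disk on $T_1$, take one innermost on $T_1$ bounding a disk $E \subset T_1$ with $\mathrm{int}(E)\cap C = \emptyset$; since $K$ is geometrically essential in both solid tori, $K \cap E = \emptyset$, and one can use $E$ to isotope $\phi(T_2)$ across it (compressing or pushing off) to reduce $|C|$, contradicting minimality unless the curve is also inessential on $\phi(T_2)$ in a compatible way. The upshot is that after minimization every curve of $C$ is essential on \emph{both} tori. The second step classifies such essential curves: on the torus $T_i$ a non-trivial simple closed curve is a meridian, a longitude, or a $(p,q)$-curve, and the constraint that $K$ is geometrically essential in each $V_i$ restricts which slopes can occur. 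The crucial case is when the curves are meridional on one or both tori, because a meridian disk of $V_i$ capped along such a curve is exactly what produces the disks $D, D'$ in conclusion (2). The third step is to cut both solid tori along a chosen pair of meridian disks $D \sqcup D'$ whose boundaries lie among the curves of $C$, and track how the pieces of $V_1$ sit inside the pieces of $V_2$; minimality of $|C|$ plus the knottedness hypotheses on $V_1$ and $V_2$ forces one component of $V_1$ cut along $D \sqcup D'$ to be a knotted ball embedded in an unknotted component of $V_2$ cut along $D \sqcup D'$, which is precisely conclusion (2).

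\medskip

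\textbf{Main obstacle.} The hard part is the bookkeeping in the third step: once the curves are essential on both tori, one must pick the right pair of meridian disks and show that minimality of the intersection, together with the requirement that $K$ meets every meridian disk of each $V_i$, pins down the nesting pattern of the cut-up solid tori. In particular one must verify that no parallel curves of $C$ cobound annuli that could be used to further reduce $|C|$ (these annulus-swallowing moves are exactly the kind used in the proof of Lemma~\ref{lem:LinV}), and that the leftover piece is genuinely a knotted ball rather than an unknotted one --- this is where the assumption that both $V_1$ and $V_2$ are \emph{knotted} solid tori is used. Since this is Motegi's result, I would in practice simply cite \cite[Lemma 2.3]{motegi} and refer the reader there for the details of the cut-and-nest analysis.
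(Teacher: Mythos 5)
The paper offers no proof of this lemma at all --- it is quoted directly from Motegi \cite[Lemma 2.3]{motegi} --- so your judgment that the expected ``proof'' is a citation is exactly right, and your approach matches the paper's. Your supplementary sketch is a plausible outline of the standard transversality-and-innermost-curve strategy, but note that it elides the genuinely hard content (ruling out non-meridional essential curves of $T_1 \cap \phi(T_2)$ and, crucially, producing disks $D, D'$ that are \emph{common} meridian disks of both solid tori simultaneously, along with the knottedness analysis of the resulting ball), so it should be read as context for the citation rather than as a substitute for Motegi's argument.
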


\smallskip

\section{Winding number zero satellites}\label{s:pfs}
In this section we prove the results stated in the introduction.

Given a link $J\subset S^3$,  a \emph{ maximal system of companion tori} of $J$
is  a finite collection of tori  $\T$  in $M_J= \overline{S^3-\eta(J)}$  with the properties that the tori in $\T$ are essential, no two tori in $\T$ are parallel in $M_J$, and each component of $M_J$ cut along $\T$ is atoroidal.
By Haken's Finiteness Theorem \cite[Lemma 13.2]{hempel}  such a collection exists.

Recall that an essential annulus  in a 3-manifold $M$ with boundary is a properly embedded annulus that is 
incompressible and cannot be homotoped to lie in $\partial M$. We need the following known result.

\begin{lem} \label{lem:annuli} Let $C$ be a knot such that the complement  $M_C=\overline{S^3-\eta(C)}$
contains essential annuli. Then $C$ is either a cable or a composite knot.
\end{lem}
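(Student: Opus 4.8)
The plan is to analyze the essential annulus $A \subset M_C$ through the boundary slope of $\partial A$ on the boundary torus $T = \partial M_C$, splitting into a meridional and a non-meridional case. First I would reduce to the case that $C$ is non-trivial: if $C$ is the unknot then $M_C$ is an unknotted solid torus, and every incompressible annulus in a solid torus is boundary-parallel, so the hypothesis is vacuous. With $C$ non-trivial, $M_C$ is irreducible with incompressible boundary, so an incompressible annulus that is not boundary-parallel is automatically boundary-incompressible (a boundary-compression of an incompressible annulus in such a manifold would exhibit it as boundary-parallel). Consequently the two components of $\partial A$ are disjoint, non-contractible curves on $T$, hence parallel and of a common slope $\gamma$; write $\gamma = a\lambda + b\mu$ in terms of the meridian $\mu$ and Seifert longitude $\lambda$ of $C$.

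\textbf{Case 1:} $\gamma = \mu$ (meridional slope). Here each component of $\partial A$ bounds a meridian disk of $\eta(C)$, and each such disk meets $C$ once. Capping $A$ off with these two disks produces a $2$-sphere $\Sigma \subset S^3$ meeting $C$ transversely in exactly two points, i.e. a decomposing sphere for the pair $(S^3, C)$. If the resulting connected-sum decomposition were trivial, one of the two balls bounded by $\Sigma$ would meet $C$ in a boundary-parallel arc, which would force $A$ to be boundary-parallel in $M_C$, contradicting essentiality. Hence the decomposition is non-trivial and $C$ is composite.

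\textbf{Case 2:} $\gamma \neq \mu$, so $a \geq 1$. Since $[\partial A] = 0$ in $H_1(T)$ and $H_2(M_C) = 0$, the annulus $A$ separates $M_C$ into two pieces $M_1, M_2$, each meeting $T$ in one of the two sub-annuli $A_1', A_2'$ into which $\partial A$ cuts $T$. The idea is to cap $A$ off with $A_i'$ to form tori $\hat T_i = A \cup A_i'$ in $S^3$, each disjoint from $C$, and to exploit that in $S^3$ every torus bounds a solid torus on at least one side. Tracking which sides are solid tori, one shows that $\eta(C)$ sits inside a solid torus $W$ (one of $\eta(C) \cup M_i$) in which the core $C$ lies on a torus running $a$ times longitudinally, i.e. $C$ is isotopic in $W$ to a curve of slope $\gamma$ on $\partial\eta(C)$. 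This is precisely the statement that $C$ is a cable: either $W$ is unknotted and $C$ is a torus knot, or $W$ is knotted and $C$ is a satellite with torus-knot pattern. Essentiality of $A$ excludes the degenerate slope $a = 1$ (which would make $\hat T_i$ boundary-parallel and $A$ inessential), guaranteeing a genuine cabling.

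I expect the main obstacle to be the non-meridional case: organizing the solid-torus pieces correctly and extracting the cable structure while systematically ruling out the degenerate configurations (boundary-parallel tori and the $a=1$ slope). A clean alternative that avoids this bookkeeping is to invoke the theory of the characteristic (Jaco--Shalen--Johannson) submanifold of $M_C$: the essential annulus lies in a Seifert-fibered piece or an $I$-bundle piece. The $I$-bundle-over-annulus case reproduces the decomposing sphere of Case 1 (composite), while a Seifert-fibered piece whose boundary contains $T$ forces $M_C$ to be a torus-knot exterior or a cable space (cable). Either route yields the dichotomy, and I would cite the relevant classification to keep the degenerate-case analysis short.
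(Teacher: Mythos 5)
Your overall dichotomy (meridional slope gives composite, non-meridional gives cable) is the right shape, and your Case 1 is essentially correct: capping a meridional annulus with two meridian disks yields a decomposing sphere, and essentiality of $A$ rules out a trivial summand. But Case 2 contains a concrete error that would derail the argument. You claim that essentiality of $A$ ``excludes the degenerate slope $a=1$,'' and that the genuine cabling corresponds to $a\geq 2$. This is backwards: for the $(p,q)$-cable of a knot, the cabling annulus is $\partial V\setminus \mathrm{int}\,\eta(C)$, and each of its boundary curves is parallel to $C$ on $\partial\eta(C)$, running \emph{once} longitudinally with the surface framing $pq$; that is, $\gamma=\lambda+pq\,\mu$, so $a=1$ and $b=pq$. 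In fact every essential non-meridional annulus in a knot exterior has integer boundary slope, i.e.\ $a=1$; the longitudinal wrapping number of the cable is the winding of $C$ in the solid torus $W$ you construct, which is not the coefficient $a$. So excluding $a=1$ excludes every actual cable, and the case analysis as organized cannot close. A secondary gap in the same case: your assertion that $[\partial A]=0$ in $H_1(T)$, hence $A$ separates, requires an orientation argument --- with coherent orientations $[\partial A]=2(a\lambda+b\mu)$ maps to $2b\mu$ in $H_1(M_C)$, which only forces $b=0$, so the possibility of a non-separating annulus with longitudinal slope must be explicitly ruled out rather than assumed away.

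Your proposed JSJ shortcut is closer to what the paper actually does, but you misstate the trichotomy. The paper cuts $M_C$ along a maximal system of companion tori (Haken finiteness, \cite{hempel}), observes that the piece $N$ adjacent to $\partial\eta(C)$ is atoroidal and contains essential annuli, hence is Seifert fibered, and then invokes \cite[Lemma VI.3.4]{jacoshalen}: $N$ is a torus knot exterior, a cable space, \emph{or} a composing space (a disk with two holes times $S^1$), the last case giving a composite knot. In your version the composite case is attributed to an $I$-bundle piece and you claim a Seifert-fibered piece meeting $\partial M_C$ forces a torus knot exterior or cable space; this omits the composing space, which is Seifert fibered (with meridional fibers) and is precisely where the meridional annuli of a composite knot live --- a neighborhood $A\times I$ of such an annulus is an $I$-bundle, but the characteristic piece containing it is the composing space. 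If you repair the slope bookkeeping in Case 2 (allowing $a=1$ and instead excluding the slopes $\lambda$ and $\mu$ there) your elementary route can be made to work, but as written both the primary argument and the fallback have gaps at exactly the points where the cable and composite cases are separated.
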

\begin{proof}  Let $\T$ be a maximal system of companion tori for $M_C$
and let $N$ be the component of  $M_C$ cut along $\T$ that contains  $\partial \eta(C)$.
Since $N$ is an atoroidal manifold containing essential annuli it is a Seifert fibered space.
By \cite[Lemma V1.3.4]{jacoshalen} there are three possibilities:
\begin{enumerate}
\item $\partial N$ has one component and  $M_C$ is the exterior of a torus knot.
\item $ \partial N$ has two components and $N$ is the exterior of a satellite of a torus knot in a solid torus.
\item $\partial N$ has three components and $D$ is a product of a disk with two holes with a circle.
\end{enumerate}
In cases (1), (2), $C$ is a cable knot while in case (3) $C$ is a composite knot.
\end{proof}

 Recall that $\mathbb{K}$ is the class of knots which do not admit cosmetic generalized crossing changes.

\begin{mainthm}
Let $C$ be a non-trivial,  prime,  non-cable knot and let $V'$ be a standardly embedded solid torus in $S^3$.  Let $K'\in \K$  and suppose that $K'$ is embedded in $V'$ so that it is geometrically essential and such that  $w(K',V')=0$.  Then any knot $K$ that is a satellite of $C$ with pattern $(V',  K')$  admits no  cosmetic generalized crossing changes of any order.
That is $K$ satisfies the nugatory crossing conjecture.
\end{mainthm}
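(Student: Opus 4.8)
The plan is to suppose, for contradiction, that $K$ admits a cosmetic generalized crossing change of some order $q$, realized by a crossing disk $D$ with crossing circle $L=\partial D$, so that $K(q)$ is isotopic to $K(0)=K$ but $L$ is not nugatory. The first reduction is to locate $L$ inside the companion solid torus. Since $C$ is prime and non-cable, Lemma \ref{lem:annuli} tells us that $M_C=\overline{S^3-\eta(C)}$ contains no essential annuli, which is precisely the hypothesis of Lemma \ref{lem:LinV}. As $K$ is a satellite of $C$ with winding number zero (inherited from $w(K',V')=0$), Lemma \ref{lem:LinV} lets me isotope $D$ so that $D\subset V$, where $V=f(V')$ is the companion solid torus. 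In particular $D$ is disjoint from the companion torus $T=\partial V$.

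Next I would transfer the crossing change from $K$ back to the pattern $K'$. Because $D$ lies in $V$ and $V$ is the image of $V'$ under the faithful embedding $f$, the crossing disk $D$ corresponds to a crossing disk $D'=f^{-1}(D)$ for $K'$ inside $V'$, and an order-$q$ twist along $L$ in $V$ matches an order-$q$ twist along $L'=f^{-1}(L)$ in $V'$. The goal here is to show that $K'(q)$ is isotopic to $K'$ in $S^3$. Since $K(q)\cong K$ and both $K$ and $K(q)$ are geometrically essential in $V$ (Lemma \ref{lem:KinV} guarantees $K(q)$ stays geometrically essential and is not the core), I want to compare their companion tori. I expect this to be the heart of the argument: using uniqueness of toroidal (JSJ-type) decompositions together with Lemma \ref{lem:KinV2} to control how an essential companion torus behaves under the filling of slope $(-1/q)$ along $L$, and invoking Motegi's Lemma \ref{thm:motegi} to compare two companion tori $T=\partial V$ and the image of $T$ under the ambient isotopy carrying $K(q)$ to $K$. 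The winding-number-zero and non-cable hypotheses should rule out the second, ``knotted ball'' alternative of Lemma \ref{thm:motegi}, forcing the companion tori to be made disjoint and hence (by primeness of $C$) parallel. This produces an orientation-preserving self-homeomorphism $h$ of $V'$ fixing the preferred longitude with $h(K'(q))=K'(0)$, witnessing that $K'(q)$ and $K'$ are isotopic in $S^3$.

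With $K'(q)\cong K'$ established, I conclude by applying Lemma \ref{lem:nugasat}. That lemma, via McCullough's theorem, says that if $L'$ were nugatory for $K'$ in $S^3$ it would already be nugatory for $K'$ inside $V'$. But $K'\in\K$ admits no cosmetic crossing changes, so the crossing change along $L'$, which yields a knot isotopic to $K'$, must be nugatory for $K'$ in $S^3$; hence $L'$ is nugatory in $V'$, meaning $L'$ bounds a disk in $V'-\eta(K')$. Pushing this disk forward by the embedding $f$ shows $L=f(L')$ bounds a disk in $S^3-\eta(K)$, i.e.\ $L$ is nugatory for $K$. This contradicts the assumption that $D$ was a cosmetic (non-nugatory) crossing disk, completing the proof.

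The main obstacle I anticipate is the middle step: rigorously deducing $K'(q)\cong K'$ from $K(q)\cong K$. The subtlety is that the ambient isotopy of $S^3$ taking $K(q)$ to $K$ need not respect the companion torus $T$, so I must use the comparison of companion tori (Motegi's lemma and decomposition uniqueness) to replace it with an isotopy that does, and then restrict the resulting homeomorphism to the pattern. Verifying that the longitude-preserving hypothesis of Lemma \ref{lem:nugasat} genuinely holds — i.e.\ that the self-homeomorphism of $V'$ one extracts can be taken to fix the preferred longitude — is where the winding-number-zero and primeness hypotheses on $C$ must be used carefully.
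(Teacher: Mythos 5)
Your overall route is the paper's: isotope $D$ into $V$ via Lemmas \ref{lem:annuli} and \ref{lem:LinV}, compare $T$ with its image under the ambient isotopy carrying $K(q)$ to $K(0)$ via Motegi's Lemma \ref{thm:motegi}, extract a longitude-preserving self-homeomorphism $h$ of $V'$ with $h(K'(q))=K'(0)$, and close with $K'\in\K$ together with Lemma \ref{lem:nugasat}. However, the middle step you yourself flag as the anticipated obstacle is a genuine gap, and the mechanism you propose for filling it is not the right one. You expect the winding-number-zero and non-cable hypotheses to rule out Motegi's knotted-ball alternative; in fact, as the paper remarks after the proof of Theorem \ref{thm:pattern1}, those hypotheses are used \emph{only} to get $D\subset V$ and are never used again. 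What actually rules out alternative (2) is a device absent from your outline: the paper first proves a Claim classifying essential tori $F$ in $W=\overline{V-\eta(K\cup L)}$ --- using Gabai's Corollary 2.4, Lemma \ref{lem:KinV2}, Lemma \ref{lem:KinV}, and a connect-sum analysis (invoking Scharlemann--Thompson) for the case $K\subset V-\widehat{F}$ --- showing that any such torus remaining essential in the complement of $K$ or $K(q)$ bounds a solid torus $\widehat{F}\subset V$ containing all of $K\cup L$. It then replaces $T$ by an innermost \emph{admissible} torus in $\T$, a maximal system of companion tori for $M_{K\cup L}$. With that choice, alternative (2) is impossible: the boundary of $B'\cup B_V$, pushed slightly into $V$, would be an essential torus in $W$ remaining essential in the complement of $K$ or $K(q)$, contradicting innermost-ness.

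A second, related inaccuracy: once $\Phi(T)\cap T=\emptyset$, you attribute the conclusion that the two tori are parallel to ``primeness of $C$,'' but the paper derives parallelism in $M_K$ from maximality of the system $\T$ together with the innermost admissible choice of $T$, not from any property of $C$. Without the Claim and the innermost-admissible device, your appeal to ``uniqueness of JSJ-type decompositions'' does not suffice, because the isotopy $\psi$ taking $K(q)$ to $K(0)$ need not respect any torus of the decomposition, and the given companion torus $T$ need not itself be innermost --- which is precisely why the paper must allow replacing $T$ (and hence $V$, $f$, and the pattern) by an innermost admissible torus before applying Motegi's lemma. Your endgame, including the push-forward of the disk furnished by Lemma \ref{lem:nugasat} through $f$ to exhibit $L$ as nugatory for $K$, is correct and matches the paper.
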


\begin{proof}
Let $(V',K')$ be as in the statement of the theorem and consider the satellite map $f:(V',K') \to (V,K)$ with $\textrm{core}(V)=C$.  Suppose that $K$ admits an order-$q$ cosmetic crossing change, and let $D$ be the corresponding crossing disk with $L = \partial D$.   Let $T = \partial V$. 

 Since $C$ is a prime, non-cable knot, by Lemma \ref{lem:annuli},
there are no essential annuli in $\overline{S^3 - V}$. Hence, by Lemma \ref{lem:LinV}, we may assume $D \subset V$, so $T$ is also a companion torus for the satellite link $K \cup L$.
The link $K' \cup L'=f^{-1}(K\cup L)$ is a pattern for $K \cup L$ with the satellite map $f: (V', K', L') \to (V,K,L)$ as above.  We will show that $L'$ is an order-$q$ cosmetic crossing circle for $K'$, which is a contradiction since $K' \in \K$.  

Since $L$ is cosmetic, $M = M_{K \cup L}$ is irreducible.  
Consider a maximal system of companion tori $\T$ for
$M$.  A torus $F \in \T$ is called \emph{innermost with respect to $K$} if $M$ cut along $\T$ has a component $N$ such that $\partial N$ contains $\partial \eta (K)$ and a copy of $F$. 

\vskip 0.06in

{\it{{Claim.}}} 
Let $F\in \T$ be an  essential torus in $W=\overline{ V -  \eta(K \cup L)}$. If $F$ becomes inessential in one of 
$\overline{V-  \eta(K)}$ or $\overline{V-  \eta(K(q))}$, then it becomes inessential in both of them.
Furthermore,
$F$ 
bounds a solid torus $\widehat{F}\subset V$ with $L$ and $K$ contained in $\widehat{F}$. In particular, if $T$ is innermost with respect to $K$,
then $W$ is atoroidal.

\vskip 0.06in

{\it Proof of claim.} 
Let   $F \subset W$ be an essential torus and let $\widehat{F}$  be a solid torus bounded by $F$ in $S^3$.
Suppose, without loss of generality, that $F$ becomes inessential in
$N = \overline{V-  \eta(K)}$.
That is, in $N$ either $F$ becomes parallel to a component of 
$\partial N$ or it becomes compressible.
If $F$ is parallel to a component of 
$\partial N$, then $\widehat{F} \subset V$.
If $F$, on the other hand, becomes compressible in $N$, then we may choose a compressing disk and compress $F$ along it to obtain 2-sphere $\Sigma$. Since $N$ is irreducible, $\Sigma$ bounds a 3-ball
$B\subset N$. Hence, we have argued that an essential torus $F$ in $W$ either bounds a solid torus $\widehat{F} \subset V$ or else $F$ bounds a 
space $X\subset V$ obtained by drilling out a 1-handle from a ball $B\subset N$. Note that $X=S^3- \widehat{F}$.
We treat these two cases separately.

\vskip 0.06in

{\it Case 1.} 
$F$ bounds a 
space $X\subset V$ obtained by drilling out a 1-handle from a ball $B\subset N$.  
Then $K$ is disjoint from $X$. Furthermore, since $F$ compresses in $N$, $L$ must meet a disk that can be used
to compress $F$ down to $\Sigma$ in $N$.
 It follows that $K\cup L$  is disjoint from $X$. 
See Figure \ref{fig:figureA} for an example.

\begin{figure}
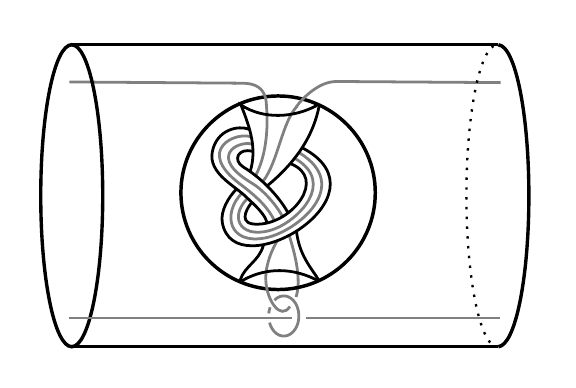
\caption{The knotted torus $F$ from Case 1.  A generalized crossing change of order $q$
will make $F$ essential in the complement of $K(q)$. }\label{fig:figureA}
\end{figure}

The torus $F$ is essential in $W$ and it 
becomes compressible in $N$, which is the 3-manifold obtained 
from $W$ by filling $\partial \eta(L)$
with slope $0$. By Gabai's result \cite[Corollary 2.4]{gabai}, $F$ must remain essential when we fill $\partial \eta(L)$
with any other slope, in particular when we fill with slope  $(-1/q)$.
Thus $F$ is essential in  $\overline{V- \eta(K(q))}$.
Since $T=\partial V$ is essential in $\overline{S^3- \eta(K(q))}$, it follows that
$F$ is essential in  $\overline{S^3- \eta(K(q))}$. On the other hand, since $F$ compresses 
in $N$, it compresses
 $\overline{S^3- \eta(K)}$. 
This is, however, impossible by Lemma \ref{lem:KinV2}.  Therefore this case will not happen.
\vskip 0.04in

{\it Case 2.} $F$ bounds a solid torus $\widehat{F}\subset V$.
As discussed above, in $N$ the torus  $F$ becomes parallel to a component of $\partial N$.
Since $F$ is essential in $W$ and $M$, it can't become parallel to $T$ in $N$.
Thus  $F$ is parallel to  $\partial \eta(K)$ in $N$.
Now we have two possibilities:
 
(i) $K \subset V - \widehat{F}$ and, since $F$ is incompressible, $L \subset \widehat{F}$;  or 

(ii) $L, K \subset \widehat{F}$.

First suppose that $K \subset V - \widehat{F}$. If $\widehat{F}$ is knotted, then either $L$ is the core of $\widehat{F}$ or $L$ is a satellite knot with companion torus $\partial \widehat{F}$.  This contradicts the fact that $L$ is unknotted.  Hence $F$ is an unknotted torus.  By definition, $L$ bounds a crossing disk $D$.  Since $D$ meets $K$ twice, $D \cap \textrm{ext}(\widehat{F}) \neq \emptyset$.  We may assume that $D$ has been isotoped (rel boundary) to minimize the number of components in $D \cap F$.  Since an innermost component of $D - (F \cap D)$ is a disk and $L$ is essential in the unknotted solid torus $\widehat{F}$, $D \cap F$ consists of standard longitudes on the unknotted torus $F$.  Hence $D \cap \textrm{ext}(\widehat{F})$ consists of either one disk which meets $K$ twice, or two disks which each meet $K$ once.  In the first case, $L$ is isotopic to the core of $\widehat{F}$, which contradicts $F$ being essential in $W$.  In the latter case, the linking number $\textrm{lk}(K,\widehat{F})= \pm 1$.  So $K$ can be considered as a connect sum $K \# U$, and the crossing change at $L$ takes place in the summand $U$.  
(See Figure \ref{fig:unknottedF}.)    Since $K'$ is geometrically essential in $V'$ there is no essential annulus in $V'- \eta(K')$
whose boundary consists of meridional curves of $\partial \eta(K')$. It follows that $U$ is the unknot.
The unknot does not admit cosmetic crossing changes of any order by \cite{schar-thom}, so $K(q) \cong K \# K'$ where $K' \not\cong U$.  This contradicts the fact that $K(q) \cong K$.  

\begin{figure}
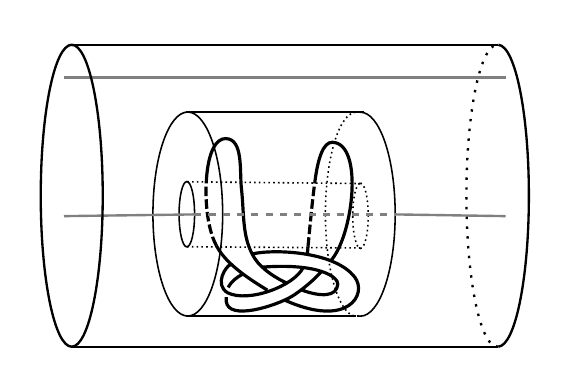
\caption{A portion of the solid torus $V$ containing the unknotted solid torus $\widehat{F}$ from Case 2. }\label{fig:unknottedF}
\end{figure}

Suppose, for a moment, that the given torus  $T$ is innermost with respect to $K$, in the sense that
 if $M$ cut along $\T$ has a component $N$ such that $\partial N$ contains $\partial \eta (K)$ and a copy of $T$.
Then possibility (ii) cannot happen, for then $F$ would be a torus in $\T$ 
that is ``closer" to $K$ than $T$. Thus, if $T$ is innermost, we can't have any essential tori in $W$.

To continue with the proof of the claim,
suppose that $L, K \subset \widehat{F}$. Then $F$ is a companion torus of the link $K\cup L$. Since $F$ becomes inessential in $N$, $F$ cannot be a companion torus of $K(0)$; in fact $K(0)$ must be the core of $\widehat{F}\subset V$ in $N$.
By Lemma \ref{lem:KinV},  $K(q)$  also becomes the core of $\widehat{F}\subset V$ in $\overline{V- \eta (K(q))}$. 
This finishes the proof of the claim.
\vskip 0.07in

To continue with the proof of the theorem, we will call a torus $F \in \T$  \emph{admissible} if it lies in  $W=\overline{ V -  \eta(K \cup L)}$ and $F$ remains essential in  $N= \overline{V- \eta(K)}$.

If $K(q)$ is not geometrically essential in $V$ 
then, by Lemma \ref{lem:KinV},  the same must be true for $K$.
But this contradicts $V$ being a companion for $K$, so $K(q)$ must be essential in $V$.  Hence $T$ is essential in both $N$ and 
$\overline{V-  \eta(K(q))}$. 

If $W$ does not contain any admissible tori then we work with the companion torus $T$. Otherwise we will replace $T$
with an admissible torus, $F\subset W$ that is innermost 
 with respect to that property. 
That is, $\overline{ \widehat{F}- \eta{(K\cup L})}$ does not contain an essential torus that remains
essential in the complement of $K$ or $K(q)$. We will argue that this  innermost companion torus  has to remain invariant
under an orientation-preserving homeomorphism of $S^3$ that brings $K$ to $K(q)$.
For simplicity of notation we will still denote this torus by $T$ and the corresponding solid torus by $V$.

Since $L$ is cosmetic, there is an ambient isotopy $\psi: S^3 \to S^3$ taking $K(q)$ to $K(0)$ such that $V$ and $\psi(V)$ are both knotted solid tori containing $K(0) = \psi(K(q)) \subset S^3$.  Then,  Lemma \ref{thm:motegi} applies to conclude that there is an ambient  isotopy $\phi: S^3 \to S^3$,
 fixing $K(0)$, such that, if we let $\Phi = (\phi \circ \psi)$,
  one of the following holds:
 \begin{enumerate}
\item  $\Phi(T) \cap T = \emptyset$.
\item There exist disjoint meridian disks $D$ and $D'$ for both $V$ and $\Phi(V)$ such that some component of $V$ cut along $(D \sqcup D')$ is a knotted 3-ball, say $B_V$, in some unknotted  component of $ \Phi(V)$ cut along $(D \sqcup D')$.  (See, for example, Figure \ref{fig:knottedball}.)
\end{enumerate}

\begin{figure}
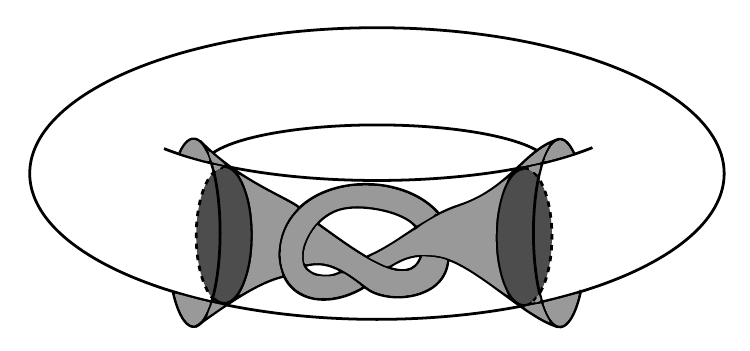
\caption{A knotted 3-ball created by the meridian disks $D$ and $D'$.}\label{fig:knottedball}
\end{figure}

Suppose we  have option (2):  Let $B'$ denote the component of $ \Phi(V)$ cut along $(D \sqcup D')$ that doesn't contain $B_V$.
The boundary of the solid torus $B'\cup B_V$, pushed slightly inside $V$, gives an essential torus in 
$\overline{V-\eta(K\cup L)}$ which remains essential  in the complement of $K$ or $K(q)$.
This, however, contradicts the assumption that $T=\partial V$ is an innermost  admissible torus. Thus option (2) cannot happen
and we conclude $\Phi(T) \cap T = \emptyset$.
Since $\T$ is a maximal system of companion tori for $M_{K\cup L}$, and  $T$ is innermost admissible,
we conclude
that $T$ and $\Phi (T)$ are parallel in $M_K$.  So, after an isotopy which fixes $K(0) \subset S^3$, we may assume that $\Phi(V) = V$.

Let $h = (f^{-1} \circ \Phi \circ f):V' \to V'$.  Then $h$ maps $K'(q)$ to $K'(0)$, 
and takes the longitude of $V'$ to itself.
The knots $K'(q)$ and $K'(0)$ are isotopic in $S^3$.  So either $L'$ gives an order-$q$ cosmetic generalized crossing change for the pattern knot $K'$, or $L'$ is a cosmetic crossing circle for $K'$. Since $K' \in \K$,  $L'$ has to be nugatory. By Lemma \ref{lem:nugasat}, $L$ is nugatory for $K$, which contradicts our assumption that $L$ is cosmetic.

\end{proof}

We now  restate and prove Corollary \ref{cor:Wd} from Section 1.

\begin{maincor}
Let $K$ be a prime knot that is not a torus knot or a cable knot. 
Then no Whitehead double of $K$ admits a cosmetic generalized crossing change of any order.
\end{maincor}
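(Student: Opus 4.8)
The plan is to realize every Whitehead double of $K$ as a winding-number-zero satellite and then apply Theorem \ref{thm:pattern1} directly, taking the companion to be $C = K$. First I would recall that a Whitehead double of $K$ (twisted or untwisted) is, by definition, the satellite knot $f(K')$, where $f\colon (V',K') \to (V,K)$ is an embedding carrying a standardly embedded solid torus $V'$ onto a tubular neighborhood $V$ of $K$, and $K' \subset V'$ is the Whitehead pattern, i.e. the self-clasping curve carrying some fixed number of full twists. The core of $V$ is then $C = K$, so the Whitehead double is exactly a satellite of $C$ with pattern $(V', K')$.

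Next I would verify the hypotheses of Theorem \ref{thm:pattern1} one at a time. By hypothesis $K$ is prime and is neither a torus knot nor a cable knot; since being a torus knot is one of the ways to be a cable knot in the sense used in this paper, $C = K$ is a non-trivial (it is not the unknot, as the unknot is a torus knot), prime, non-cable knot, precisely as the theorem requires, and the corresponding solid torus $V$ is knotted. The torus $V'$ is standardly embedded by construction. The Whitehead pattern is geometrically essential in $V'$ and is not the core of $V'$: every meridian disk of $V'$ meets $K'$ twice, so the geometric intersection number is nonzero. Finally, the two strands of the clasp traverse each meridian disk in opposite directions, so the algebraic intersection number vanishes and $w(K',V') = 0$; since this cancellation is a feature of the clasp itself, it is unaffected by the twisting, and hence holds for every twisted Whitehead double as well.

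The one remaining point---and the only place external input is needed---is to check that $K' \in \K$. Here I would observe that the Whitehead pattern, regarded as a knot in $S^3$ by forgetting the embedding into $V'$, is the unknot. By the result of Scharlemann and Thompson \cite{schar-thom} recalled in the introduction, the unknot admits no cosmetic generalized crossing changes, that is $K' \in \K$. With all hypotheses in place, Theorem \ref{thm:pattern1} applies to conclude that the Whitehead double $K = f(K')$ admits no cosmetic generalized crossing change of any order. I expect the only mildly delicate bookkeeping to be confirming that the geometrically-essential and winding-number-zero conditions persist under the twisting that produces the various twisted doubles, but both follow immediately from the structure of the clasp pattern, so no genuine obstacle arises and the statement follows as a direct corollary.
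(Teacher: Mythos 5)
Your proof is correct and follows essentially the same route as the paper: realize the Whitehead double as a satellite with the geometrically essential, winding-number-zero Whitehead pattern, observe that the pattern knot is the unknot and hence lies in $\K$ by \cite{schar-thom}, and apply Theorem \ref{thm:pattern1} with companion $C=K$. The only divergence is that the paper additionally treats the case where $K$ is the unknot (its Whitehead doubles are twist knots, hence 2-bridge, and \cite{torisu} applies), whereas you rule that case out by reading the hypotheses as excluding the unknot --- which is defensible under standard conventions (the unknot is a trivial torus knot and is usually not counted as prime), so nothing essential is lost.
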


\begin{proof}
All Whitehead doubles admit a pattern $(V',U)$ where $U$ is the unknot, $w(U,V')=0$, and $U$ intersects every meridian disc of $V'$  twice. Since $U\in \K$, if $K$ is non-trivial,  the result follows immediately from Theorem \ref{thm:pattern1}.
Now suppose that $K$ is also the  unknot. Then any Whitehead double of $K$ is a \emph{twist knot}.
These knots are known to be 2-bridge knots and the conclusion follows from \cite{torisu}.  Alternatively, the conclusion
also follows from the methods of \cite{balm} since twist knots have genus one and by \cite{lyon},
admit unique minimal genus Seifert surface.

\end{proof}

A carefull observation of the proof of Theorem \ref{thm:pattern1}, reveals that the hypotheses that
$C$ be a prime,  non-cable knot  and that $w(K',V')=0$ allow us, using  Lemma 
\ref{lem:LinV}, to isotope a potential cosmetic crossing disk inside the companion solid torus
$V$. Once this is done, the above hypotheses are not used again.  In other words,
the arguments in the proof of Theorem \ref{thm:pattern1} show that if $K'\in \K$ then
no crossing circle inside $V$ can support a cosmetic crossing change. In fact we have the following.

\begin{thm}
Let $C\in \K$ be a non-trivial knot and let $V'$ be a standardly embedded solid torus in $S^3$. Let $K'\in \K$ and suppose that $K'$ is embedded in $V'$ so that it is geometrically essential. Then any knot $K$ that is a satellite of $C$ with pattern $(V', K')$ admits no cosmetic generalized crossing changes supported on crossing circles disjoint from
the companion torus $T=\partial V$.

\end{thm}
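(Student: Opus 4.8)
The plan is to use the hypothesis $L\cap T=\emptyset$ to split into the two cases $L\subset\mathrm{int}(V)$ and $L\subset\overline{S^3-V}=M_C$, and to close them with $K'\in\K$ and (the nontriviality of $C$, together with) $C\in\K$, respectively. This is the dichotomy anticipated in the remark: once a cosmetic crossing disk is localized on one side of the companion torus, the mechanism of Theorem \ref{thm:pattern1} applies on that side.

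First I would record the preliminary reductions. Assuming an order-$q$ cosmetic crossing change at $L$ with crossing disk $D$ and $L\cap T=\emptyset$, the link exterior $M_{K\cup L}$ is irreducible exactly as in the proof of Lemma \ref{lem:LinV}, and since $C$ is non-trivial the torus $T$ is incompressible in $M_C$. I would then analyze $\C=D\cap T$, a system of closed curves. Curves bounding disks on $T$ are removed by innermost-disk isotopies, and an innermost sub-disk of $D$ lying in $M_C$ would compress $T$; hence every innermost sub-disk lies in $V$ and, after cleanup, $\C$ consists of parallel meridians of $V$. Writing $D_0$ for the innermost sub-disk cut off by $\C$, it is a meridian disk of $V$ and therefore meets the core $C$ in exactly one point.

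Case 1 ($L\subset\mathrm{int}(V)$). After discarding the boundary-parallel annular pieces of $D$ cut off by $\C$, the disk $D$ lies in $V$, so $L'=f^{-1}(L)\subset V'$ is an order-$q$ crossing circle for the pattern $K'$. I would then run the argument of Theorem \ref{thm:pattern1} verbatim: Motegi's Lemma \ref{thm:motegi} together with the innermost-companion-torus analysis shows that the isotopy realizing $K(q)\cong K$ can be taken to preserve $T$, so it descends to an isotopy $K'(q)\cong K'$; since $K'\in\K$ the circle $L'$ must be nugatory, and Lemma \ref{lem:nugasat} upgrades this to $L$ being nugatory for $K$, contradicting cosmeticity. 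Case 2 ($L\subset M_C$). Here the key computation is that the innermost meridian disk $D_0$ forces $\mathrm{lk}(L,C)=\pm1$; combined with $\mathrm{lk}(L,K)=w(K,V)\cdot\mathrm{lk}(L,C)=0$ this gives $w(K,V)=0$. Because $K'$ is geometrically essential with zero winding, every meridian disk of $V$ meets $K$ in at least two points, so $D_0$ (which can contain at most the two points of $D\cap K$) meets $K$ in exactly two points and $D\smallsetminus D_0$ is disjoint from $K$. Consequently $L$ is isotopic in $M_K$, across the annulus $D\smallsetminus D_0$, to the meridian $\partial D_0$ pushed slightly into $V$, i.e. to an interior crossing circle performing the same crossing change on $K$; this returns us to Case 1. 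The nontriviality of $C$ (incompressibility of $T$) is essential throughout, while $C\in\K$ is what controls the companion side when $C$ is itself a cable or composite and $M_C$ carries essential annuli.

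The hard part will be Case 2 and, inside it, the invariance of the companion torus. Concretely, the delicate points are: controlling $\C=D\cap T$ when $M_C$ contains essential annuli (so that $D$ cannot simply be pushed off $T$), which is precisely where $C$ being a cable or composite must be accommodated; verifying the linking computation $\mathrm{lk}(L,C)=\pm1$ and the resulting concentration of all of $D\cap K$ onto $D_0$; and establishing companion uniqueness, namely that the ambient isotopy giving $K(q)\cong K$ carries $T$ to an isotopic torus, so that the crossing change truly descends to the pattern. These are the steps that consume the hypotheses $K'\in\K$, $C\in\K$, and the geometric essentiality of $K'$, and they are exactly the phenomena the innermost-torus machinery of Theorem \ref{thm:pattern1} was built to handle.
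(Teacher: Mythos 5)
Your dichotomy (where $L$ sits relative to $T$) is not the paper's, and your version has a genuine gap at the cleanup step. The paper sorts instead by how many times $K$ pierces the subdisks of $D$ cut off by $D\cap T$, and your claim that after innermost-disk isotopies $\C=D\cap T$ consists of parallel meridians with a \emph{single} innermost subdisk $D_0$ containing all of $D\cap K$ is false in general, for two reasons. First, a curve of $\C$ that is inessential on $T$ cannot be removed by an innermost-disk isotopy when its subdisk in $D$ is pierced by $K$: swapping that subdisk for the disk on $T$ changes $|D\cap K|$ and destroys the crossing disk (this is why the paper's cleanup only guarantees that each component of $D\cap T$ bounds a subdisk whose interior meets $K$). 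Second, and more seriously, $D$ may contain a subdisk pierced \emph{exactly once} by $K$. Such a subdisk is forced to be a meridian disk of $V$, so $K$ has wrapping number one, $K=K'\# C$ with $T$ the follow--swallow torus, and the crossing change is carried by the companion summand $C$. In your Case 2 this configuration produces two once-pierced meridian subdisks with opposite signs, so $\mathrm{lk}(L,C)=0$ rather than $\pm 1$; your key linking computation collapses, and with it the derivation $w(K,V)=0$ and the concentration of $D\cap K$ in $D_0$. This once-pierced case is precisely the paper's first case, and it is the \emph{only} place the hypothesis $C\in\K$ is consumed: the order-$q$ change there is a generalized crossing change on $C$ itself, contradicting $C\in\K$. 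Your proposal never genuinely uses $C\in\K$; you assign it the job of taming essential annuli in $M_C$, which is a misdiagnosis.

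Relatedly, the essential-annuli worry you flag as the ``hard part'' is a red herring in this theorem. Once every subdisk of $D$ cut off by $\C$ is pierced exactly twice, all curves of $\C$ are parallel to $\partial D$ on $D$, all of $D\cap K$ lies in the innermost disk, and the rest of $D$ is disjoint from $K$; one can then isotope $T$ itself across $D-D_0$ (and across $L$) in the complement of $K$, removing all intersection curves with no boundary-parallelism of the annular pieces required. That is the paper's ``slide $T$ off of $D$,'' valid even when $C$ is a cable or composite knot; the annulus control of Lemma \ref{lem:annuli} is needed only in Lemma \ref{lem:LinV} for Theorem \ref{thm:pattern1}, where $D$ must be moved together with a minimal genus Seifert surface. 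Your Case 2 reduction across the annulus $D-D_0$ is morally this same move, and your endgame in Case 1 (Motegi's Lemma \ref{thm:motegi}, an innermost admissible torus, $h=f^{-1}\circ\Phi\circ f$, and the upgrade to nugatory via Lemma \ref{lem:nugasat}) matches the paper. The fix is therefore structural: replace the inside/outside dichotomy by the pierce-count dichotomy, kill the once-pierced case via the composite/follow--swallow analysis using $C\in\K$, and justify the slide of $T$ as above before quoting the machinery of Theorem \ref{thm:pattern1}.
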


\begin{proof}
Let $(V',K')$ be as in the statement of the theorem and consider the satellite map $f:(V',K') \to (V,K)$ with $\textrm{core}(V)=C$. Suppose that $K$ admits an order-$q$ cosmetic crossing change, and let $D$ be the corresponding crossing disk with $L = \partial D$, such that $L\cap T=\emptyset$.

After isotopy we may assume that each component of $D\cap T$ bounds a disk in $D$ whose interior intersects
$K$.

First suppose that $D\cap T$ contains a component bounding a disk $E\subset D$
whose interior is intersected exactly once by $K$. The disk $E$ is a meridian disk of $V$.
It follows that $K$ is a composite knot, $K=K'\# C$, and $T$ is the follow-swallow torus.
Furthermore, the cosmetic order-$q$ crossing must occur on the companion $C$. However,
this contradicts the fact that $C\in \K$. Thus this case will not happen.

Assume, therefore, that every component of $D\cap T$ bounds a disk on $D$ whose interior is pierced exactly
twice by $K$. It follows that every component of $D\cap T$ is parallel to $\partial D$ on $D$. Thus we may slide $T$ off of $D$ to assure that $D \subset V$.
Now $T$ is also a companion torus for the satellite link $K \cup L$.
The link $K' \cup L'=f^{-1}(K\cup L)$ is a pattern for $K \cup L$ with the satellite map $f: (V', K', L') \to (V,K,L)$ as above.
The situation is exactly as before the statement of the Claim in the proof of Theorem \ref{thm:pattern1}.
The argument therein applies to show
that $L'$ is an order-$q$ cosmetic crossing circle for $K'$, which is a contradiction since $K' \in \K$.

\end{proof}

\bibliographystyle{amsplain}
\bibliography{references}

\end{document}